\documentclass[12pt]{amsart}

\usepackage{amsmath, amsthm, amssymb, amsfonts} 
\usepackage{mathtools} 
\usepackage{bm} 
\usepackage{graphicx} 
\usepackage{hyperref} 
\usepackage{cleveref} 
\usepackage{geometry} 
\usepackage{enumitem} 
\usepackage{booktabs} 
\usepackage{siunitx} 
\usepackage{microtype} 
\usepackage{xcolor}
\usepackage{a4wide}
\usepackage{cancel}
\usepackage{ulem}
\allowdisplaybreaks
\allowdisplaybreaks[4]

\theoremstyle{plain}
\newtheorem{theorem}{Theorem}[section]
\newtheorem{lemma}[theorem]{Lemma}

\numberwithin{equation}{section}

\theoremstyle{definition}

\newtheorem{remark}[theorem]{Remark}

\newtheorem*{claim}{Claim}

\title[Gradient growth on the sphere]{Growth of vorticity gradient for the Euler equation on the sphere}

\author{Daomin Cao, Junhong Fan, Guolin Qin}
\address{State Key Laboratory of Mathematical Sciences, Academy of Mathematics and Systems Science, Chinese Academy of Sciences, Beijing 100190, P.R. China and University of Chinese Academy of Sciences, Beijing 100049,  P.R. China}
\email{dmcao@amt.ac.cn}

\address{Institute of Applied Mathematics, AMSS, Chinese Academy of Sciences, Beijing 100190, and University of Chinese Academy of Sciences, Beijing 100049,  P.R. China}
\email{fanjunhong@amss.ac.cn}

\address{State Key Laboratory of Mathematical Sciences, Academy of Mathematics and Systems Science, Chinese Academy of Sciences, Beijing 100190, P.R. China}
\email{qinguolin18@mails.ucas.ac.cn}

\begin{document}

\begin{abstract}
   We prove that for solutions of the Euler equation on the sphere, the vorticity gradient can grow at most double-exponentially in time, and we show that this upper bound is sharp by constructing explicit solutions with odd symmetry that exhibit double-exponential growth in the hemisphere. We also extend the results to the case of a rotating sphere. This seems to be the first result on the growth of the  vorticity gradient for ideal fluids on a compact manifold with non‑trivial geometry.
\end{abstract}

 \maketitle{\small{\bf Keywords:}   Incompressible Euler equation, rotating sphere, double-exponential growth.  \\

\section{Introduction}
\label{sec:introduction}
In this paper, we consider an incompressible ideal fluid on the unit sphere $\mathbb{S}^2:=\{\mathbf{x}=(x_1,x_2,x_3)\in\mathbb{R}^3:\,x_1^2+x_2^2+x_3^2=1\}$. The motion of the fluid can be described by the following incompressible Euler equation:
\begin{equation}\label{Euler}
	 	\begin{cases}
	 		\partial_t\mathbf{v}+\nabla_\mathbf{v}\mathbf{v}=-\nabla P,\ \ &\text{on}\ \  \mathbb{S}^2\times \mathbb{R}^+,\\
	 		\text{div}\,\mathbf{v}=0,\ \ &\text{on}\ \ \mathbb{S}^2\times \mathbb{R}^+,

	 	\end{cases}
	 \end{equation} 
     where $ \mathbf{v}(\mathbf{x},t)=(v_1(\mathbf{x},t),v_2(\mathbf{x},t),v_3(\mathbf{x},t)) $ is the velocity field belongs to $T_\mathbf{x}\mathbb{S}^2$, the tangent plane of $\mathbb{S}^2$ at $\mathbf{x}$, $ P$ is the scalar pressure. $\nabla_\mathbf{v}$ denotes the covariant derivative along $\mathbf{v}$, $\nabla$ the gradient operator, and $\text{div}$ the divergence operator—all taken with respect to the standard metric on $\mathbb{S}^2$. Global well‑posedness of \eqref{Euler} in suitable function spaces can be established by adapting the classical methods of \cite{MB,MCP}, in close analogy with the two‑dimensional Euclidean case. For instance, a detailed proof of global well‑posedness in $H^s$ for any $s>2$ is given in \cite{TM}. We also note that a rotating sphere (about some axis) can be considered, which is physically relevant; see \cite{CAG}. 

    In this paper, we focus on the problem of vorticity gradient growth for the Euler equation on the sphere. This problem has long been studied for planar domains and the torus, but, as we shall show, remains completely unexplored on the sphere. Before presenting our main results, we briefly recall the known background.

\subsection{Background}

For the two-dimensional incompressible Euler equation, the maximum vorticity magnitude $\|\omega\|_{L^\infty}$ is conserved in time. However, its gradient $\|\nabla\omega\|_{L^\infty}$ can grow, reflecting the continuous generation of small-scale features, which is a key aspect of two-dimensional fluid dynamics. Quantifying the possible growth rate of $\|\nabla\omega\|_{L^\infty}$ has therefore been a longstanding challenge, dating back to the foundational works of Wolibner \cite{WW}, Hölder \cite{HE} and Yudovich \cite{YV3}. Indeed, they proved that the gradient growth does not exceed a double exponential. More precisely: 
\begin{equation*} \|\nabla\omega(\cdot,t)\|_{L^\infty}\leq(1+\|\nabla\omega_0\|_{L^\infty})^{C\exp{(Ct)}},
\end{equation*}
where the constant $C$ depends only on $\|\omega_0\|_{L^\infty}$. For more details on this double-exponential rate, the reader is referred to \cite{KV,YV1}. The validity of the double-exponential upper bound is known only for domains with regular boundaries. For domains whose boundary is smooth except at two points, finite-time blow-up can occur, as shown by Kiselev and Zlatoš \cite{KAZA}. The question of whether this bound is sharp, namely whether there exist solutions whose vorticity gradient indeed grows at a double‑exponential rate, has since become a central open problem.

Over the past decades, a series of works have been devoted to understanding the extent to which vorticity gradients can grow in two-dimensional Euler flows. These investigations have spanned a variety of domain geometries, each offering distinct dynamical mechanisms and technical challenges. In bounded domains with impermeable boundaries, Nadirashvili \cite{NS} and Yudovich \cite{JV,YV2} demonstrated the possibility of unbounded and at least linear growth, respectively. For more related results in this direction, the reader is referred to \cite{DE,DEJ,KA,KAA}. A major breakthrough came from Kiselev and Šverák \cite{KV}, who constructed smooth initial vorticity on a disk, motivated by numerical evidence of singularity formation in three dimensions obtained by Luo and Hou \cite{LH}, and proved that the double‑exponential upper bound is attainable for all time. Their construction is based on the stability of a particular odd-symmetric steady state (the Bahouri–Chemin vortex) and the presence of a boundary, where the flow compresses vorticity gradients at an accelerating rate. This result was later extended to general bounded  planar domains with an axis of symmetry by Xu \cite{XX}. Additionally, it is noteworthy that the double-exponential growth result in \cite{KV} has recently been extended to the free boundary setting by Hu, Luo and Yao \cite{HLY}, where the fluid domain is allowed to deform and the Biot-Savart law is not directly available.

On the torus $\mathbb{T}^2$, which has no boundary but is compact, progress has also been achieved. Denisov \cite{DS1} established the existence of infinite-time superlinear growth of the vorticity gradient, and later showed that double‑exponential growth is possible over arbitrarily long finite time intervals \cite{DS2}. In a related direction, he also constructed patch solutions to the 2D Euler equation with a prescribed regular stirring, for which the distance between two approaching patches decreases double-exponentially in time \cite{DSA}. Zlatoš \cite{ZA1} considered perturbations of the steady state $\omega^*(\mathbf{x})= \operatorname{sgn}(x_1)\operatorname{sgn}(x_2)$ and proved exponential growth of $\|\nabla\omega(\cdot,t)\|_{L^\infty}$ for initial data of class $C^{1,\alpha}$ with $\alpha<1$, as well as exponential growth of $\|\nabla^2\omega(\cdot,t)\|_{L^\infty}$ for smooth vorticities. In this setting, the lack of boundary is compensated by the $C^{1,\alpha}$ regularity, which ensures the presence of more vorticity near the hyperbolic stagnation point.

The situation on unbounded domains such as the whole plane $\mathbb{R}^2$ has remained far more delicate. In contrast to the double‑exponential growth achieved on a disk \cite{KV}, the best-known result on the unbounded domain $\mathbb{R}^2$ is linear growth, obtained by Choi and Jeong \cite{CJ} for certain perturbations of the Lamb–Chaplygin dipole. However, no mechanism for faster‑than‑linear growth had been identified in unbounded settings until very recently. Zlatoš \cite{ZA2} has now shown that on the half‑plane, using odd symmetry and boundary effects, double‑exponential growth can indeed be achieved for compactly supported initial vorticity, marking an important advance in our understanding of gradient growth on unbounded domains. More recently, Jeong, Zhou and Yao \cite{JYZ} proved superlinear gradient growth both in $\mathbb{T}^2$ and $\mathbb{R}^2$ using a steady state and a saddle point. Notably, \cite{JYZ} does not require excessive odd symmetry and holds for certain small perturbations near the steady state.

Amidst these developments, one natural geometry has remained largely unexplored: the sphere $\mathbb{S}^2$. The Euler equation on a sphere is a fundamental model for fluid flows on planetary surfaces and has attracted attention from geophysicists, meteorologists, and mathematicians over the past decades. Understanding vorticity gradient growth on curved surfaces is a step toward realistic models of planetary atmospheres and oceans. From a mathematical standpoint, the sphere is a compact manifold without boundary, yet with nontrivial curvature. Whether the double‑exponential growth mechanisms observed on the disk, the torus, or the half‑plane can operate on the sphere is a natural question. Addressing it advances our understanding of two-dimensional Euler dynamics on curved manifolds and is relevant to large-scale atmospheric and oceanic flows.

In this paper, we shall investigate this question and provide a complete answer. We will establish a double‑exponential upper bound for the vorticity gradient on the sphere and prove its sharpness by constructing explicit solutions.
  
 \subsection{Main results}
To state our main results, we first introduce some notations. For relevant discussions of the Euler equation on the sphere or spherical geometry, the reader may refer to \cite{CLW, CW24,CSMC, CGLZ, DR,GHR, LF,GE,SZ2,SZ1} and references therein.

\indent Let $|\mathbf{x}-\mathbf{y}|$ denote the Euclidean distance between $\mathbf{x}, \mathbf{y}\in\mathbb{R}^3$. The spherical coordinates $(\varphi,\theta)$ of $\mathbb{S}^2$ are
\begin{equation*}
    x_1=\cos\theta\cos\varphi,\quad x_2=\cos\theta\sin\varphi,\quad x_3=\sin\theta,\quad (\varphi,\theta)\in[-\pi,\pi)\times\left[-\frac{\pi}{2},\frac{\pi}{2}\right].
\end{equation*}
Then for any $f:\mathbb{S}^2\to \mathbb{R}$, we define $\tilde{f}(\varphi,\theta):=f(\mathbf{x})$. 
The Riemannian volume element in spherical coordinates $(\varphi,\theta)$ is
\begin{equation*}
    d\sigma(\mathbf{x})=\cos\theta \,d\varphi\mkern 1mu d\theta.
\end{equation*}
In what follows, we denote by $(\varphi',\theta')$ the spherical coordinates corresponding to $\mathbf{y}=(y_1,y_2,y_3)\in\mathbb{S}^2$. 
For $y_3\neq\pm1$, the tangent space $T_\mathbf{y}\mathbb{S}^2$ has an orthonormal basis $(\mathbf{e}_\varphi(\mathbf{y}),\mathbf{e}_\theta(\mathbf{y}))$ given by
\begin{equation*}
    \mathbf{e}_\varphi(\mathbf{y}):=\left(-\sin\varphi',\cos\varphi',0\right)=\left(-\frac{y_2}{\sqrt{1-y_3^2}},\frac{y_1}{\sqrt{1-y_3^2}},0\right),
\end{equation*}
\begin{equation*}
    \mathbf{e}_\theta(\mathbf{y}):=\left(-\sin\theta'\cos\varphi',-\sin\theta'\sin\varphi',\cos\theta'\right)=\left(-\frac{y_3y_1}{\sqrt{1-y_3^2}},-\frac{y_3y_2}{\sqrt{1-y_3^2}},\sqrt{1-y_3^2}\right).
\end{equation*}

According to \cite{DB}, denote
\begin{equation*}
    \mathcal{G}\omega(\mathbf{x},t):=\int_{\mathbb{S}^2}G(\mathbf{x},\mathbf{y})\omega(\mathbf{y},t)\,d\sigma(\mathbf{y}),\qquad G(\mathbf{x},\mathbf{y}):=\frac{1}{2\pi}\ln|\mathbf{x}-\mathbf{y}|.
\end{equation*}
The corresponding Euler equation in terms of the vorticity $\omega$ can be written as
\begin{equation}\label{Eulerw}
    \begin{cases}
        \partial_t\omega+u\cdot\nabla \omega=0,\ \ &\text{on}\ \ \mathbb{S}^2\times \mathbb{R}^+,\\
        u(\mathbf{x},t)=\nabla^\perp \mathcal{G}\omega(\mathbf{x},t),\ \ &\text{on}\ \ \mathbb{S}^2\times \mathbb{R}^+,
    \end{cases}
\end{equation}
where $\omega$ should satisfy the Gauss constraint
\begin{equation*}
    \int_{\mathbb{S}^2}\omega(\mathbf{x},t)\,d\sigma(\mathbf{x})=0,
\end{equation*}
and $u(\mathbf{x},t)=\tilde{u}_\varphi(\varphi,\theta,t)\mathbf{e}_\varphi+\tilde{u}_\theta(\varphi,\theta,t)\mathbf{e}_\theta$ must be supplemented by the following impermeability condition
\begin{equation*}
    \tilde{u}_\theta\left(\varphi,-\frac{\pi}{2},t\right)=\tilde{u}_\theta\left(\varphi,\frac{\pi}{2},t\right)=0\quad \text{for any}\quad \varphi\in[-\pi,\pi).
\end{equation*}

By    \cite[Lemma A.2]{DR}, the Biot-Savart law on the sphere is
\begin{equation*}
    u(\mathbf{x},t)=\frac{1}{2\pi}\int_{\mathbb{S}^2}\frac{\mathbf{x}\wedge\mathbf{y}}{|\mathbf{x}-\mathbf{y}|^2}\,\omega(\mathbf{y},t)\,d\sigma(\mathbf{y}).
\end{equation*}

Finally, note that from the definition of $\tilde{\omega}$, a direct computation yields
\begin{equation*}
\begin{split}
    \partial_\varphi\tilde{\omega}(\varphi,\theta)=&-(\cos\theta\sin\varphi)\partial_{x_1}\omega(\cos\theta\cos\varphi,\cos\theta\sin\varphi,\sin\theta)\\&+(\cos\theta\cos\varphi)\partial_{x_2}\omega(\cos\theta\cos\varphi,\cos\theta\sin\varphi,\sin\theta)\\
    =&-x_2\,\partial_{x_1}\omega(\mathbf{x})+x_1\,\partial_{x_2}\omega(\mathbf{x}),
\end{split}      
\end{equation*}
and
\begin{equation*}
\begin{split}
    \partial_\theta\tilde{\omega}(\varphi,\theta)=&-(\sin\theta\cos\varphi)\partial_{x_1}\omega(\cos\theta\cos\varphi,\cos\theta\sin\varphi,\sin\theta)\\&+(\sin\theta\sin\varphi)\partial_{x_2}\omega(\cos\theta\cos\varphi,\cos\theta\sin\varphi,\sin\theta)\\&+(\cos\theta)\partial_{x_3}\omega(\cos\theta\cos\varphi,\cos\theta\sin\varphi,\sin\theta)\\
    =&\frac{-x_1x_3}{\sqrt{x_1^2+x_2^2}}\,\partial_{x_1}\omega(\mathbf{x})+\frac{-x_2x_3}{\sqrt{x_1^2+x_2^2}}\,\partial_{x_2}\omega(\mathbf{x})+\sqrt{x_1^2+x_2^2}\,\partial_{x_3}\omega(\mathbf{x})
\end{split}      
\end{equation*}
for $\theta\neq\pm\pi/2$.

We now state our first main result, which provides an upper bound for the double-exponential gradient growth rate on the sphere.
\begin{theorem}\label{Main2}
    For any $\tilde{\omega}(\cdot,0)\in W^{1,\infty}\left([-\pi,\pi)\times(0,\pi/2]\right)$ or $\tilde{\omega}(\cdot,0)\in W^{1,\infty}\left([-\pi,\pi)\times[-\pi/2,\pi/2]\right)$ with $\|\tilde{\omega}(\cdot,0)\|_{L^\infty}=1$, the corresponding solution to \eqref{Eulerw} satisfies
    \begin{equation}\label{main2}
        \lim_{t\to\infty}\frac{\ln\ln\|\nabla\tilde{\omega}(\cdot,t)\|_{L^\infty{\left([-\pi,\pi)\times(0,\pi/2]\right)}}}{t}\leq\frac{2}{\pi}.
    \end{equation}
\end{theorem}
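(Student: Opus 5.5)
The plan is to follow the classical Wolibner–Yudovich argument, keeping track of the constants so that the rate $2/\pi$ in \eqref{main2} comes out. Since $\omega$ is transported by a divergence-free field, $\|\omega(\cdot,t)\|_{L^\infty}=1$ for all $t$. Differentiating along trajectories gives
\begin{equation*}
\frac{d}{dt}\|\nabla\omega(\cdot,t)\|_{L^\infty}\le \|\nabla u(\cdot,t)\|_{L^\infty}\,\|\nabla\omega(\cdot,t)\|_{L^\infty}.
\end{equation*}
On the sphere, with $\nabla$ the covariant gradient, there are extra curvature terms. They are bounded by $C\|u\|_{L^\infty}\le C$, so they only add lower-order contributions. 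The statement is phrased in the chart $(\varphi,\theta)$ on the northern region. There $|\nabla\tilde\omega|$ differs from the intrinsic gradient by the factor $\cos\theta$ in the $\varphi$-direction, which degenerates only at the pole, and near the pole one switches to a rotated chart. None of these bounded distortions affect the double-logarithmic rate.

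The key step is a sharp log-Lipschitz estimate for the velocity:
\begin{equation*}
\|\nabla u\|_{L^\infty}\le \frac{2}{\pi}\,\|\omega\|_{L^\infty}\ln\bigl(1+\|\nabla\omega\|_{L^\infty}\bigr)+C.
\end{equation*}
Here $C$ depends only on the sphere and on $\|\omega\|_{L^\infty}$. To prove it, differentiate the Biot–Savart kernel $\frac{1}{2\pi}\frac{\mathbf x\wedge\mathbf y}{|\mathbf x-\mathbf y|^2}$ and split the integral at a scale $\delta$.

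\emph{Far field $|\mathbf x-\mathbf y|>\delta$.} The kernel derivative is $O(|\mathbf x-\mathbf y|^{-2})$, and integrating $|\omega|$ over the annulus $\delta<|\mathbf x-\mathbf y|<2$ gives $c\,\ln(1/\delta)+C$. The constant $c$ comes from the leading singular part, which is the planar Calderón–Zygmund kernel $\frac{1}{2\pi}\partial\frac{z^\perp}{|z|^2}$ in geodesic normal coordinates. Its size is $\frac{1}{\pi|z|^2}$ times a unit-modulus angular factor of the form $\cos 2\alpha$ or $\sin2\alpha$, and $\int_0^{2\pi}|\cos2\alpha|\,d\alpha=4$. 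So $c=\frac{1}{\pi}\cdot 4\cdot\frac{1}{2}\cdot\frac{1}{?}$, which must be computed carefully. I expect the sharp constant in front of $\ln(1/\delta)$ to be $2/\pi$, matching the claim.

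\emph{Near field $|\mathbf x-\mathbf y|<\delta$.} The principal-value integral has mean-zero angular part, so we may subtract $\omega(\mathbf x)$. The remainder is bounded by $C\delta\|\nabla\omega\|_{L^\infty}$. The curvature correction to the kernel is $O(|\mathbf x-\mathbf y|^{-1})$, which is integrable and contributes $O(1)$.

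Choosing $\delta=1/(1+\|\nabla\omega\|_{L^\infty})$ gives the estimate.

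Set $y(t)=\ln(e+\|\nabla\omega(\cdot,t)\|_{L^\infty})$. Then $y'\le \frac{2}{\pi}y+C$, so $y(t)\le (y(0)+C')e^{2t/\pi}$. Taking logarithms gives
\begin{equation*}
\ln\ln\|\nabla\tilde\omega\|\le \frac{2t}{\pi}+O(1),
\end{equation*}
and hence \eqref{main2}, reading the limit as a limsup.

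For data given only on $[-\pi,\pi)\times(0,\pi/2]$, I interpret the problem as the odd extension across the equator. The extension $\omega(x_1,x_2,-x_3)=-\omega(\mathbf x)$ satisfies the Gauss constraint automatically and is preserved by the flow. The odd extension of a $W^{1,\infty}$ function that vanishes on the equator is again Lipschitz, so the argument reduces to the full-sphere case. If it does not vanish there, the extension has a jump; then I would work in the upper hemisphere alone, treating the equator as an impermeable boundary. By symmetry $u_\theta=0$ there, and the estimate is done with a reflection kernel exactly as in the disk case.

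The main obstacle is obtaining the sharp constant $2/\pi$ rather than some unspecified $C$. This requires identifying precisely the leading singularity of $\nabla_{\mathbf x}\frac{\mathbf x\wedge \mathbf y}{|\mathbf x-\mathbf y|^2}$ on $\mathbb S^2$ and computing the angular $L^1$ norm of its symbol. It also requires checking that the curvature-induced terms and the lower-order pieces of the chart gradient enter only additively, so that they do not alter the exponent.
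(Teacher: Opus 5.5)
\textbf{Comparison with the paper.} Your route is different from the paper's. The paper never estimates $\nabla u$ pointwise. It bounds the relative approach speed of two points at $(\pm\varepsilon,0)$ through \eqref{approach v}, and notes by the bathtub principle that the maximizing vorticity is the odd--odd function $\operatorname{sgn}(y_2y_3)\mathcal{X}_{L_\varepsilon(a_{I,\varepsilon})}$. It then evaluates this with Lemma~\ref{Velocity estimate}, so $2/\pi$ appears as $\frac{4}{\pi}\int_0^{\pi/2}\cos\theta''\sin\theta''\,d\theta''$. A Lipschitz truncation gives \eqref{v 3}, which is integrated along pairs of trajectories. Your Eulerian scheme (sharp strain bound, then Gronwall for $\ln\|\nabla\omega\|_{L^\infty}$) can reach the same constant: $\frac{1}{2\pi}\int_0^{2\pi}|\cos2\alpha|\,d\alpha=\frac{2}{\pi}$ is the same number, and its maximizer $\operatorname{sgn}\cos2\alpha$ is exactly the odd--odd configuration. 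The paper's route reuses the lemma behind Theorem~\ref{Main1} and works only with trajectories. Yours is a purely local kernel computation with no rearrangement.

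\textbf{First gap: the constant.} As written, the proposal has a gap exactly where the theorem has content, since a double-exponential bound with \emph{some} constant is classical. You leave the constant as ``$\frac1\pi\cdot4\cdot\frac12\cdot\frac1?$'' and only expect $2/\pi$. The kernel size you state, $\frac{1}{\pi|z|^2}$, is off by a factor $2$ and would give $4/\pi$. Two facts are needed.
First, only the symmetric part $S$ of $\nabla u$ matters. Along trajectories $\frac{d}{dt}|\nabla\omega|^2=-2\langle\nabla\omega,S\nabla\omega\rangle$; the antisymmetric part $\frac{\omega}{2}J$ drops out and there are no curvature terms. Moreover $|S|_{\mathrm{op}}$ equals the operator norm of the trace-free part of $\nabla^2\mathcal{G}\omega$.
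Second, take the Hessian in $\mathbf{x}$, with $(r,\alpha)$ geodesic polar coordinates of $\mathbf{y}$ about $\mathbf{x}$. The trace-free Hessian of $\frac{1}{2\pi}\ln|\mathbf{x}-\mathbf{y}|$ is then $-\frac{1}{2\pi r^2}\begin{pmatrix}\cos2\alpha&\sin2\alpha\\ \sin2\alpha&-\cos2\alpha\end{pmatrix}+O(1)$. Hence the far-field part of $S(\mathbf{x})$ has operator norm $\sup_\beta\bigl|\int_{r>\delta}\frac{\cos(2\alpha-\beta)}{2\pi r^2}\,\omega\,d\sigma\bigr|+O(1)$. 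For each fixed $\beta$, the supremum over $|\omega|\le1$ is $\frac{2}{\pi}\ln\frac1\delta+O(1)$. You must bound this supremum over directions; bounding the two matrix entries separately only yields $\frac{2\sqrt2}{\pi}$, which does not prove \eqref{main2}.

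\textbf{Second gap: the equator.} The hemisphere case needs an extra argument too. Suppose the odd extension jumps across the equator, as in Theorem~\ref{Main1}, where $\tilde\omega\equiv1$ up to $\theta=0$. Then your near-field step ``subtract $\omega(\mathbf{x})$'' fails for $\mathbf{x}$ at distance $\eta<\delta$ from the equator. Bounding the jump crudely on $B_\delta(\mathbf{x})\cap\{\theta<0\}$ gives an extra term of order $\ln(\delta/\eta)$, which is unbounded in $\mathbf{x}$. Appealing to the disk case does not help, because the classical boundary estimates carry unspecified constants. What saves the sharp constant is cancellation. Up to $O(\delta\|\nabla\omega\|_{L^\infty})$, the jump contributes $-2\omega(\mathbf{x})$ times the trace-free kernel integrated over that cap. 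At radius $r>\eta$ the cap is an arc of half-width $\gamma=\arccos(\eta/r)$, on which $\bigl|\int e^{2i\alpha}\,d\alpha\bigr|=\sin2\gamma\le2\eta/r$. So the cap contributes at most $\frac{2\eta}{\pi}\int_\eta^\infty r^{-2}\,dr=\frac{2}{\pi}$, and no second logarithm appears. With these two computations your argument does give \eqref{main2}, read as a $\limsup$.
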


It is natural to ask whether the growth rate $2/\pi$ is optimal. By imposing odd symmetry across the equator, the dynamics on the full sphere reduce to those on the hemisphere $\mathbb{S}^2_+$.
On the hemisphere $\mathbb{S}^2_+$, the Euler equation for $\omega$ can be written as
\begin{equation}\label{Eulerw+}
    \begin{cases}
        \partial_t\omega+u\cdot\nabla \omega=0,\ \ &\text{on}\ \ \mathbb{S}^2_+\times \mathbb{R}^+,\\
        u(\mathbf{x},t)=\nabla^\perp \mathcal{G}_+\omega(\mathbf{x},t),\ \ &\text{on}\ \ \mathbb{S}^2_+\times \mathbb{R}^+.
    \end{cases}
\end{equation}
The associated Green's function and the stream function are
\begin{equation*}
    \mathcal{G}_+\omega(\mathbf{x},t):=\int_{\mathbb{S}^2}G_+(\mathbf{x},\mathbf{y})\omega(\mathbf{y},t)\,d\sigma(\mathbf{y}),\qquad G_+(\mathbf{x},\mathbf{y}):=\frac{1}{2\pi}\ln\frac{|\mathbf{x}-\mathbf{y}|}{|\mathbf{x}-\bar{\mathbf{y}}|},
\end{equation*}
where $\bar{\mathbf{y}}:=(\cos\theta'\cos\varphi',\cos\theta'\sin\varphi',-\sin\theta')=(y_1,y_2,-y_3)$.

Our second theorem shows that double-exponential gradient growth can occur for smooth solutions on the hemisphere, and demonstrates that the constant $2/\pi$ in \eqref{main2} is indeed attainable.
\begin{theorem}\label{Main1}
    There exists $\tilde{\omega}(\cdot,0)\in C^\infty\left([-\pi,\pi)\times\left(0,\pi/2\right]\right)$ with $\|\tilde{\omega}(\cdot,0)\|_{L^\infty}=1$ such that the corresponding solution to \eqref{Eulerw+} satisfies
    \begin{equation}\label{main1}
        \lim_{t\to\infty}\frac{\ln\ln\|\nabla\tilde{\omega}(\cdot,t)\|_{L^\infty\left([-\pi,\pi)\times(0,\pi/2]\right)}}{t}=\frac{2}{\pi}.
    \end{equation}
\end{theorem}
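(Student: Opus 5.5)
Since Theorem~\ref{Main2} already gives the upper bound $\limsup\le 2/\pi$ (the hemisphere problem \eqref{Eulerw+} is the odd-in-$x_3$ reduction of \eqref{Eulerw}), it remains to construct data with $\liminf\ge 2/\pi$. The model is the Kiselev--Šverák construction on the disk, with the equator playing the role of the boundary and the north pole playing the role of the hyperbolic stagnation point. I will also use odd symmetry in $\varphi$ (about $\varphi=0$): the zero set of $\omega$ is then the meridian $\{\varphi=0\}\cup\{\varphi=-\pi\}$ together with the equator. Let $\mathbf{p}=(1,0,0)$ be the point where the meridian $\varphi=0$ meets the equator. The stagnation corner is a point on the equator where the meridian and the equator meet at right angles.

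\textbf{Choice of data.} Take $\tilde\omega_0$ odd in $\varphi$, with $\tilde\omega_0=1$ on $(0,\pi)\times(0,\pi/2]$ except in a thin neighbourhood of the meridian and the equator, where it is smoothly cut off so that it vanishes on these lines. This is the spherical analogue of the Bahouri--Chemin vortex. The first step is a key lemma for the velocity near $\mathbf{p}$, in local coordinates $x=\varphi$, $y=\theta$, in a small quadrant $Q(\delta)=\{0<x,y<\delta\}$. I will expand $G_+$ near $\mathbf{p}$, using $|\mathbf{x}-\mathbf{y}|^2=2-2\mathbf{x}\cdot\mathbf{y}$ and the method of images, and aim to show
\[
\tilde u_\varphi(x,y)=-\frac{4}{\pi}\,x\int_{Q(x,y)}\frac{y'x'}{|\mathbf{x}'|^4}\,\omega\,dx'dy'+x\,B_1,\qquad
\tilde u_\theta=\frac{4}{\pi}\,y\int_{Q(x,y)}\frac{y'x'}{|\mathbf{x}'|^4}\,\omega+y\,B_2,
\]
with $|B_i|\le C\|\omega\|_{L^\infty}$ uniformly. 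Here $Q(x,y)=[2x,\delta)\times[2y,\delta)$, and the sign is chosen so that the flow is hyperbolic: it compresses toward the meridian and expands along the equator, or the reverse depending on the sign of $\omega$. The curvature corrections to the Euclidean kernel are smooth and of relative size $O(|\mathbf{x}-\mathbf{y}|^2)$, so they only enter $B_i$. The contribution from the far region, including the images across $\varphi=\pm\pi$, is Lipschitz and vanishes on the symmetry lines, so it is also absorbed into $B_i$.

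\textbf{Trapping and growth.} Following \cite{KV}, I define the region $\Omega(t)=\{0<x<X(t),\ 0<y<x\}$ near the corner, whose edge moves according to $X'=-X\,(\tfrac{4}{\pi}\cdot\text{log integral}-C)$. The incompressibility of the flow on the sphere, with area element $\cos\theta\,d\varphi\,d\theta\approx d\varphi\,d\theta$ near the equator, keeps the measure of the set where $\omega<1$ small for all time: it is bounded by the initial measure of the cutoff region. It follows that the integral satisfies $\int_{Q(x,y)}\frac{x'y'}{|\mathbf{x}'|^4}\omega\ge \frac{\pi}{2}\cdot\frac12\ln\frac1{x}-C$ for points in the quadrant outside a small exceptional set. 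This gives $\tilde u_\varphi\le -x\bigl(\ln\frac1x-C\bigr)$ near the meridian, so the trajectory satisfies $\frac{d}{dt}\ln\frac1{x}\ge \ln\frac1x-C$. Tracking the exact constant gives $x(t)\le \exp(-c\,e^{2t/\pi})$ up to $o(1)$ in the exponent. A point with $\omega\approx1$ is therefore transported to within $\exp(-e^{(2/\pi-\epsilon)t})$ of the meridian, where $\omega=0$. This forces $\|\nabla\tilde\omega\|_{L^\infty}\ge \exp(e^{(2/\pi-\epsilon)t})$ for every $\epsilon$, and combined with Theorem~\ref{Main2} yields \eqref{main1}.

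\textbf{Main obstacle.} The hard step is to pin down the sharp constant in the key lemma. The $\log$ coefficient in front of $\ln(1/x)$ must match exactly the $2/\pi$ in Theorem~\ref{Main2}. This requires a careful Taylor expansion of the spherical kernel $\frac{\mathbf{x}\wedge\mathbf{y}}{|\mathbf{x}-\mathbf{y}|^2}$ together with its three images (reflection across the equator and across the meridian), showing that the spherical geometry contributes only bounded Lipschitz errors. The second difficulty is controlling the small set where $\omega\neq1$ uniformly in time. For this I would use measure preservation plus the bound $\|\omega\|_{L^\infty}=1$, exactly as in \cite{KV}.
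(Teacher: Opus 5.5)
There is a genuine gap, and it starts with your initial data. Near $\mathbf{x}_0=(1,0,0)$ the flow compresses toward the meridian and stretches away from the equator ($\tilde u_\varphi\le0$, $\tilde u_\theta\ge0$ in Lemma~\ref{Precise velocity}). So the equator is the \emph{incoming} direction of the hyperbolic point, and all fluid that accumulates near $\mathbf{x}_0$ at small scales comes from the equatorial strip.

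\textbf{Data and measure argument.} If $\tilde\omega_0$ is cut off to vanish on the equator, the set where $\omega=1$ starts inside $\{\theta\ge\delta_0\}$. Heuristically, under $(\varphi,\theta)\mapsto(\varphi e^{-\Lambda},\theta e^{\Lambda})$ it is carried into $\{\theta\ge\delta_0e^{\Lambda}\}$. The corner then fills with small vorticity, and the bound $\int_{Q(x,y)}\frac{x'y'}{|\mathbf{x}'|^4}\omega\gtrsim\ln\frac1x$ is lost. This is why the paper (like \cite{KV} and \cite{ZA2}) requires $\tilde\omega_0=1$ on $\Omega_{\varepsilon(0)}$, which reaches down to the equator. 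It is also why the data is only $C^\infty$ on $(0,\pi/2]$: its odd extension jumps across the equator. With an odd-compatible cutoff, your data would be smooth odd--odd data on all of $\mathbb{S}^2$, which the paper's second remark lists as open.

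Independently, measure preservation cannot produce the logarithm. A set of measure $\epsilon$ filling the quarter-annulus $\{2r<\sqrt{\varphi'^2+\theta'^2}<\sqrt{\epsilon}\}$ removes all but $O(\ln\frac1\epsilon)$ of the $\frac12\ln\frac1r$. The logarithm has to come from a barrier argument showing $\omega\equiv1$ on an explicit moving region that reaches the trajectory's scale. Your $\Omega(t)=\{0<x<X(t),\,0<y<x\}$ contains the meridian, where $\omega=0$, and you verify no barrier conditions on its edges.

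Your constants are also inconsistent. $\frac\pi2\cdot\frac12\ln\frac1x$ exceeds $\frac12\ln\frac1x$, the largest possible value of the integral over the whole quarter. And $\frac{d}{dt}\ln\frac1x\ge\ln\frac1x-C$ would give rate $1>\frac2\pi$, contradicting Theorem~\ref{Main2}.

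\textbf{The sharp constant.} Even with a correct trapping argument, the sharp rate comes from the geometry of the trapped region, not from a finer Taylor expansion of the kernel; curvature only contributes bounded errors, as you say. A Kiselev--\v{S}ver\'ak triangle $\{y<x\}$ only guarantees $\frac4\pi\int_0^{\pi/4}\sin\phi\cos\phi\,d\phi=\frac1\pi$. To reach $\frac4\pi\int_0^{\pi/2}\sin\phi\cos\phi\,d\phi=\frac2\pi$, the trapped region must asymptotically fill the whole right angle at $\mathbf{x}_0$. Following Zlato\v{s}, the paper traps $\omega=1$ in $\alpha(t)\Omega_{\varepsilon(t)}$, where $\Omega_\varepsilon=\{\varepsilon<\varphi'<e^{-1},\ 0<\theta'<g(\varphi')\}$ and $g(s)=s\ln(e^e-1+|\ln s|)$. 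Then $g(s)/s\to\infty$ while $\ln g(s)/\ln s\to1$, so $h(s)\to\frac2\pi$.

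On the top curve $\theta=\alpha g(s)$ one has $\theta/\varphi\to\infty$, and there your claimed uniform bound $|B_i|\le C\|\omega\|_{L^\infty}$ is false. For $\omega=1$ on $\{0<\theta'<\theta\}$, the region $\{\varphi<\varphi'<\theta,\ 0<\theta'<\theta\}$ contributes a term of order $\ln(\theta/\varphi)$ to $\tilde u_\varphi/\sin\varphi$. This is why Lemma~\ref{Velocity estimate} carries $\min\{\ln(1+\theta/\varphi),\ \|\nabla\tilde\omega\|\theta/\|\tilde\omega\|\}$ in the error. It is also why $g$ grows slowly enough that $1+\ln(1+g(s)/s)\le f(s)$ is still dominated by $h(s)|\ln s|$, cf.\ \eqref{h,f}.

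Finally, the paper rescales by $\alpha(t)$, driven by the velocity on the outer edge $\varphi=\alpha e^{-1}$. This makes the contribution of vorticity far from the corner cancel in the difference \eqref{precise v3}, while the outer part of the boundary is handled by \eqref{precise v2}. None of this appears in your sketch.
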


\begin{remark}
    A simple scaling argument shows that the result is not limited to the normalization $\|\tilde{\omega}(\cdot,0)\|_{L^\infty}=1$. If the initial vorticity satisfies $\|\tilde{\omega}(\cdot,0)\|_{L^\infty}=b$ for some $b>0$, then the conclusion of Theorem \ref{Main1} can be adapted accordingly. More precisely, the double-exponential growth rate $2/\pi$ in \eqref{main1} is replaced by $2b/\pi$.
\end{remark}
\begin{remark}
    Similar to the half-space result of Zlatoš \cite{ZA2}, the gradient of the solution constructed in our Theorem~\ref{Main1} has a singularity at the boundary of the hemisphere (i.e., the equator). Whether one can construct smooth solutions on the whole plane, the torus, and the sphere, which achieve double‑exponential gradient growth remains an outstanding open problem.
\end{remark}

\begin{figure}[htbp]
    \centering
    \includegraphics[width=0.5\textwidth]{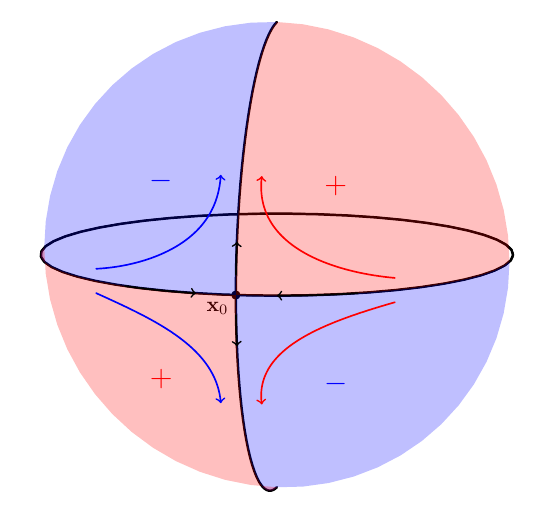}
    \caption{Odd-odd symmetric vorticity drives particles toward the symmetry axis $\{\varphi=0\}$.}
    \label{fig:1}
\end{figure}

     The proof of our main results is built on the strategy developed in \cite{KV,ZA1,ZA2}, but requires substantial adaptations to the spherical geometry. The proof of our upper bound (Theorem~\ref{Main2}) follows the general strategy of \cite{ZA2} (which relies on precise estimates on the corresponding Dirichlet Green’s functions  for uniformly smooth domains in $\mathbb{R}^2$) and does not require any symmetry assumption. For the construction achieving the sharp rate (Theorem~\ref{Main1}), we impose odd-odd symmetry on the vorticity with respect to both the equator and a meridian, which reduces the dynamics to the quarter-sphere $\mathbb{S}^2_q$ and creates an effective corner where vorticity gradients can be compressed. Under this symmetry, the Biot–Savart law on the sphere takes a form that allows us to derive sharp asymptotic estimates for the velocity components $\tilde u_\varphi$ and $\tilde u_\theta$ in spherical coordinates. A careful decomposition of the integral kernels reveals that the leading-order contributions come from a region where both angular variables are large compared to the observation point, while the remainders are controlled either by constants or by the gradient of the vorticity. This analysis leads to precise expressions for the velocity field near the corner, which are then used to track the evolution of a carefully constructed family of shrinking domains $\alpha(t)\Omega_{\varepsilon(t)}$. By choosing the scaling function $\alpha(t)$ and the shrinking parameter $\varepsilon(t)$ appropriately, we ensure that the boundary of the set where the vorticity equals $1$ never enters these domains. The resulting dynamics forces $\varepsilon(t)$ to satisfy a differential inequality whose solution exhibits double‑exponential decay with the optimal rate $2/\pi$, thereby establishing the sharpness of the upper bound.

      The rest of this paper is organized as follows: In Section 2, we establish a key lemma concerning the velocity and prove Theorem \ref{Main2}. In Section 3, we proceed to the proof of Theorem \ref{Main1}. Finally, in Section 4, we briefly mention the corresponding results on the rotating sphere.

\section{Proof of Theorem \ref{Main2}}
We start this section by giving some necessary notations and properties, and fix some assumptions regarding the sphere.

\subsection{Properties and Notations on the Sphere}
For positive functions $f$ and $g$, we write $f\lesssim g$ if there exists an absolute constant $C>0$ such that $f\leq Cg$. The notation $f\sim g$ means $f\lesssim g$ and $g\lesssim f$. For $k\in \mathbb{R}$, we write $k=O(g)$ to denote $|k|\lesssim g$.

By \cite[Proposition 1]{CAG}, for any sufficiently regular solution $\omega(\cdot,t)$ of \eqref{Eulerw}, the following conservation law holds:
\begin{equation*}
    \omega(\cdot,t)\in\mathcal{R}_{\omega(\cdot,0)}\quad\text{for any}\quad t\in[0,\infty),
\end{equation*}
where for any measurable (with respect to the Riemannian measure) function $v:\mathbb{S}^2\to\mathbb{R}$, $\mathcal{R}_v$ denotes the rearrangement class of $v$ on $\mathbb{S}^2$, i.e.,
\begin{equation*}
    \mathcal{R}_v:=\{g:|\{\mathbf{x}\in\mathbb{S}^2:g(\mathbf{x})>s\}|=|\{\mathbf{x}\in\mathbb{S}^2:v(\mathbf{x})>s\}|,\;\forall s\in\mathbb{R}\}.
\end{equation*}
Hence, for any $p\in[1,\infty]$, $\|\omega(\cdot,t)\|_{L^p(\mathbb{S}^2)}$ is conserved.

We denote
\begin{equation*}
    \tilde{\mathbf{x}}:=(\cos\theta\cos\varphi,-\cos\theta\sin\varphi,\sin\theta)=(x_1,-x_2,x_3),\quad
    \bar{\mathbf{x}}:=(\cos\theta\cos\varphi,\cos\theta\sin\varphi,-\sin\theta)=(x_1,x_2,-x_3).
\end{equation*}
Let $\mathbb{S}^2_{q}:=\{(x_1,x_2,x_3)\in\mathbb{S}^2: x_2>0,\;x_3>0\}$ be the quarter-sphere. Consider an odd-odd symmetric vorticity $\omega(\cdot)\in L^\infty(\mathbb{S}^2)$, namely,
\begin{equation*}
    \omega(\mathbf{x})=-\omega(\tilde{\mathbf{x}})=-\omega(\bar{\mathbf{x}}),
\end{equation*}
or equivalently $\tilde{\omega}(\varphi,\theta)=-\tilde{\omega}(-\varphi,\theta)=-\tilde{\omega}(\varphi,-\theta)$. Then for $\theta\neq\pm\pi/2$, we have
\begin{equation}\label{uvarphi}
\begin{split}
    \tilde{u}_\varphi(\varphi,\theta) &= u(\mathbf{x})\cdot \mathbf{e}_\varphi \\
    &= \frac{1}{2\pi}\int_{\mathbb{S}^2}\frac{-x_1^2y_3-x_2^2y_3+x_2x_3y_2+x_1x_3y_1}{|\mathbf{x}-\mathbf{y}|^2\sqrt{1-x_3^2}}\,\omega(\mathbf{y})\,d\sigma(\mathbf{y}) \\
    &= \frac{1}{2\pi}\int_{\mathbb{S}^2\cap\{y_2>0\}}\left(\frac{-x_1^2y_3-x_2^2y_3+x_2x_3y_2+x_1x_3y_1}{|\mathbf{x}-\mathbf{y}|^2\sqrt{1-x_3^2}} - \frac{-x_1^2y_3-x_2^2y_3-x_2x_3y_2+x_1x_3y_1}{|\mathbf{x}-\tilde{\mathbf{y}}|^2\sqrt{1-x_3^2}}\right)\omega(\mathbf{y})\,d\sigma(\mathbf{y}) \\
    &= \frac{1}{2\pi}\int_{\mathbb{S}^2_{q}}\frac{2x_2\Bigl(x_3y_2\bigl(x_2^2+y_2^2+(x_1-y_1)^2+(x_3-y_3)^2\bigr)+2y_2\bigl(-x_1^2y_3-x_2^2y_3+x_1x_3y_1\bigr)\Bigr)}{|\mathbf{x}-\mathbf{y}|^2|\mathbf{x}-\tilde{\mathbf{y}}|^2\sqrt{1-x_3^2}}\,\omega(\mathbf{y})\,d\sigma(\mathbf{y}) \\
    &\quad -\frac{1}{2\pi}\int_{\mathbb{S}^2_{q}}\frac{2x_2\Bigl(x_3y_2\bigl(x_2^2+y_2^2+(x_1-y_1)^2+(x_3+y_3)^2\bigr)+2y_2\bigl(x_1^2y_3+x_2^2y_3+x_1x_3y_1\bigr)\Bigr)}{|\mathbf{x}-\bar{\mathbf{y}}|^2|\mathbf{x}-\bar{\tilde{\mathbf{y}}}|^2\sqrt{1-x_3^2}}\,\omega(\mathbf{y})\,d\sigma(\mathbf{y}) \\
    &= \frac{x_2}{\pi\sqrt{1-x_3^2}}\int_{\mathbb{S}^2_{q}}\left(\frac{2y_2(x_3-y_3)}{|\mathbf{x}-\mathbf{y}|^2|\mathbf{x}-\tilde{\mathbf{y}}|^2}-\frac{2y_2(x_3+y_3)}{|\mathbf{x}-\bar{\mathbf{y}}|^2|\mathbf{x}-\bar{\tilde{\mathbf{y}}}|^2}\right)\omega(\mathbf{y})\,d\sigma(\mathbf{y}),
\end{split}
\end{equation}
and
\begin{equation}\label{utheta}
\begin{split}
    \tilde{u}_\theta(\varphi,\theta) &= u(\mathbf{x})\cdot \mathbf{e}_\theta \\
    &= \frac{1}{2\pi}\int_{\mathbb{S}^2}\frac{x_1y_2-x_2y_1}{|\mathbf{x}-\mathbf{y}|^2\sqrt{1-x_3^2}}\,\omega(\mathbf{y})\,d\sigma(\mathbf{y}) \\
    &= \frac{1}{2\pi}\int_{\mathbb{S}^2_{q}}\left(\frac{4x_3y_3(x_1y_2-x_2y_1)}{|\mathbf{x}-\mathbf{y}|^2|\mathbf{x}-\bar{\mathbf{y}}|^2\sqrt{1-x_3^2}} - \frac{4x_3y_3(-x_1y_2-x_2y_1)}{|\mathbf{x}-\tilde{\mathbf{y}}|^2|\mathbf{x}-\tilde{\bar{\mathbf{y}}}|^2\sqrt{1-x_3^2}}\right)\omega(\mathbf{y})\,d\sigma(\mathbf{y}) \\
    &= \frac{2x_3}{\pi\sqrt{1-x_3^2}}\int_{\mathbb{S}^2_{q}}\left(\frac{y_3\bigl((x_1-y_1)y_2-(x_2-y_2)y_1\bigr)}{|\mathbf{x}-\mathbf{y}|^2|\mathbf{x}-\bar{\mathbf{y}}|^2}+\frac{y_3\bigl((x_2+y_2)x_1-(x_1-y_1)x_2\bigr)}{|\mathbf{x}-\tilde{\mathbf{y}}|^2|\mathbf{x}-\tilde{\bar{\mathbf{y}}}|^2}\right)\omega(\mathbf{y})\,d\sigma(\mathbf{y}).
\end{split}
\end{equation}

Under the odd-odd symmetry assumption, from the above expressions we obtain that $\tilde{u}_\varphi$ is odd in $\varphi$ and even in $\theta$, while $\tilde{u}_\theta$ is even in $\varphi$ and odd in $\theta$. In particular,
\begin{equation*}
    \tilde{u}_\varphi(0,\theta)=0\quad\text{for any}\quad \theta\in\left(-\frac{\pi}{2},\frac{\pi}{2}\right),
\end{equation*}
and
\begin{equation*}
    \tilde{u}_\theta(\varphi,0)=0\quad\text{for any}\quad \varphi\in[-\pi,\pi).
\end{equation*}

Let $\mathbf{e}_2=(0,1,0)$. When $\theta=\pm\pi/2$, the component of $u(\mathbf{x})$ in the $x_2$-axis direction is
\begin{equation*}
    u_2(\mathbf{x})=u(\mathbf{x})\cdot\mathbf{e}_2=\int_{\mathbb{S}^2}\frac{y_1}{2\pi|\mathbf{x}-\mathbf{y}|^2}\,\omega(\mathbf{y})\,d\sigma(\mathbf{y})=0.
\end{equation*}
Therefore, the odd-odd symmetry is preserved for all time.

\subsection{Velocity estimates}

In this subsection we derive asymptotic expressions for the velocity components $\tilde u_\varphi$ and $\tilde u_\theta$ under the odd-odd symmetry assumption for vorticity. These estimates are crucial for the subsequent analysis of particle trajectories and for quantifying the growth of the vorticity gradient. Following the strategy of \cite{KV,ZA1, ZA2}, we split the Biot–Savart integral into a dominant part coming from a region where both $\varphi'$ and $\theta'$ are large compared to $\varphi$ and $\theta$, and several remainder regions that contribute only lower-order terms. The leading contribution turns out to be expressed in terms of the integral of $\frac{y_2y_3}{|\mathbf{y}-\mathbf{x}_0|^4}$ over a quarter-sphere region $Q(3\varphi/2,3\theta/2)$. The remainders are controlled either by a constant or by quantities involving the gradient of $\omega$, leading to the bounds stated in Lemma \ref{Velocity estimate}. Compared with the analysis in \cite{KV,ZA1, ZA2, ZA2}, the spherical setting introduces additional geometric factors due to curvature and the chordal distance, but the overall structure of the estimates remains similar.  

Denote $\mathbf{x}_0=(1,0,0)$, and for $k_i\geq0$, $i=1,2$, define
\begin{equation*}
    Q(k_1,k_2):=\left\{(\cos\theta'\cos\varphi',\cos\theta'\sin\varphi',\sin\theta'):\varphi'\in(k_1,\pi),\;\theta'\in\left(k_2,\frac{\pi}{2}\right)\right\}.
\end{equation*}
Then the following key lemma holds.
\begin{lemma}\label{Velocity estimate}
    There exists a universal constant $C_1\geq1$ such that if $\tilde{\omega}\in L^\infty([-\pi,\pi)\times[-\pi/2,\pi/2])$ is odd in $\varphi$ and $\theta$, then for each $(\varphi,\theta)\in[0,1]^2\setminus\{(0,0)\}$, we have
    \begin{equation}\label{velocity estimate1}
        \tilde{u}_\varphi(\varphi,\theta)=\frac{x_2}{\sqrt{1-x_3^2}}\left(-\frac{4}{\pi}\int_{Q\left(\frac{3\varphi}{2},\frac{3\theta}{2}\right)}\frac{y_2y_3}{|\mathbf{y}-\mathbf{x}_0|^4}\omega(\mathbf{y})d\sigma(\mathbf{y})+B_\varphi(\varphi,\theta)\right),
    \end{equation}
    \begin{equation}\label{velocity estimate2}
        \tilde{u}_\theta(\varphi,\theta)=\frac{x_3}{\sqrt{1-x_3^2}}\left(\frac{4x_1}{\pi}\int_{Q\left(\frac{3\varphi}{2},\frac{3\theta}{2}\right)}\frac{y_2y_3}{|\mathbf{y}-\mathbf{x}_0|^4}\omega(\mathbf{y})d\sigma(\mathbf{y})+B_\theta(\varphi,\theta)\right),
    \end{equation}
    where $B_\varphi$ and $B_\theta$ depend on $\tilde{\omega}$ and  satisfy
    \begin{equation*}
        \begin{split}
            &|B_\varphi(\varphi,\theta)|\leq C_1\|\tilde{\omega}\|_{L^\infty}\left(1+\min\left\{\ln\left(1+\frac{\theta}{\varphi}\right),\frac{\|\nabla\tilde{\omega}\|_{L^\infty(\left[0,{3\theta}/{2}\right]^2)}}{\|\tilde{\omega}\|_{L^\infty}}\theta\right\}\right),\\
            &|B_\theta(\varphi,\theta)|\leq C_1\|\tilde{\omega}\|_{L^\infty}\left(1+\min\left\{\ln\left(1+\frac{\varphi}{\theta}\right),\frac{\|\nabla\tilde{\omega}\|_{L^\infty(\left[0,{3\varphi}/{2}\right]^2)}}{\|\tilde{\omega}\|_{L^\infty}}\varphi\right\}\right).
        \end{split}
    \end{equation*}
\end{lemma}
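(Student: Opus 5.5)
Our plan follows the Kiselev–Šverák / Zlatoš scheme. We start from the symmetrized representations \eqref{uvarphi} and \eqref{utheta}. In each, the prefactor $x_2/\sqrt{1-x_3^2}$, resp.\ $x_3/\sqrt{1-x_3^2}$, is already factored out. The first task is to identify the bracketed integral kernels, call them $K_\varphi(\mathbf{x},\mathbf{y})$ and $K_\theta(\mathbf{x},\mathbf{y})$, with the limiting kernel as $\mathbf{x}\to\mathbf{x}_0$. Since $\mathbf{x}_0=(1,0,0)$ lies on both symmetry planes, the four image points $\mathbf{y},\tilde{\mathbf{y}},\bar{\mathbf{y}},\tilde{\bar{\mathbf{y}}}$ all lie at the same chordal distance from $\mathbf{x}_0$. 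A first-order Taylor expansion in $(x_2,x_3)$ then produces the leading kernel
\[
K_\varphi\approx -\frac{4}{\pi}\frac{y_2y_3}{|\mathbf{y}-\mathbf{x}_0|^4},\qquad K_\theta\approx \frac{4x_1}{\pi}\frac{y_2y_3}{|\mathbf{y}-\mathbf{x}_0|^4}.
\]
We will verify this by explicit differentiation, using $|\mathbf{x}-\mathbf{y}|^2=2-2\mathbf{x}\cdot\mathbf{y}$. The expansion is valid when $|\mathbf{y}-\mathbf{x}_0|\gtrsim|\mathbf{x}-\mathbf{x}_0|$. For $(\varphi,\theta)\in[0,1]^2$ we have $|\mathbf{x}-\mathbf{x}_0|\sim\varphi+\theta$ and $|\mathbf{y}-\mathbf{x}_0|\sim\varphi'+\theta'$ for $\mathbf{y}$ in a neighbourhood of $\mathbf{x}_0$, with the far region contributing $O(\|\omega\|_{L^\infty})$ trivially.

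Next we split $\mathbb{S}^2_q$ into four pieces: the main region $Q(3\varphi/2,3\theta/2)$; a ball $D_1=\{|\mathbf{y}-\mathbf{x}_0|\le C(\varphi+\theta)\}$; and two strips outside it, $S_1=\{\varphi'<3\varphi/2\}$ and $S_2=\{\theta'<3\theta/2\}$.

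On $Q(3\varphi/2,3\theta/2)\setminus D_1$, the Taylor remainder is bounded by $(\varphi+\theta)/|\mathbf{y}-\mathbf{x}_0|^3$. This integrates to $O(1)$ in two dimensions, and the curvature corrections from the factors $\cos\theta'$ and chordal-versus-geodesic distance are also bounded.

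On $D_1$ the kernels have size $\lesssim |\mathbf{x}-\mathbf{y}|^{-1}(\varphi+\theta)^{-1}$ or better, once the cancellations built into \eqref{uvarphi}, \eqref{utheta} are used. The area is $\sim(\varphi+\theta)^2$, so this piece is again $O(\|\omega\|_{L^\infty})$. Here we must keep the prefactor $x_2/\sqrt{1-x_3^2}\sim\varphi$ outside, and check that the leading-term integral restricted to $D_1\cap Q$ is also bounded. That is true because $y_2y_3/|\mathbf{y}-\mathbf{x}_0|^4\lesssim|\mathbf{y}-\mathbf{x}_0|^{-2}$ is integrated over the sector where $\varphi'>3\varphi/2$, $\theta'>3\theta/2$, inside a ball of comparable radius.

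The strips produce the logarithmic terms, and we expect the strip $S_2$ for $B_\varphi$ (and symmetrically $S_1$ for $B_\theta$) to be the main obstacle. In $S_2$, $\theta'<3\theta/2$ while $\varphi'$ ranges over $(3\varphi/2,1)$. The kernel $K_\varphi$ there behaves like $\theta'/(\varphi'^2+\theta^2)\cdot\varphi'/(\ldots)$. Integrating the crude bound $|\omega|\le\|\omega\|_{L^\infty}$ over $\varphi'\in(\varphi,\theta)$ gives $\ln(1+\theta/\varphi)$, while the part $\varphi'\gtrsim\theta$ is bounded.

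For the gradient alternative, we use odd symmetry: $\tilde\omega(\varphi',-\theta')=-\tilde\omega(\varphi',\theta')$, so $|\tilde\omega(\varphi',\theta')|\le\|\nabla\tilde\omega\|_{L^\infty([0,3\theta/2]^2)}\theta'$ on the part of the strip with $\varphi'\le 3\theta/2$. Inserting this extra factor $\theta'\lesssim\theta$ kills the logarithmic divergence and yields the bound $\|\nabla\tilde\omega\|\theta$. The strip $S_1$ for $\tilde u_\varphi$ is harmless, because the kernel there carries a factor $y_2\lesssim\varphi$ that compensates; $S_2$ is harmless for $\tilde u_\theta$ in the same way.

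The delicate part of this step is tracking exactly which cancellation (the odd reflection in $\varphi'$ versus in $\theta'$) controls which kernel. We would first bound each of $|K_\varphi|,|K_\theta|$ pointwise by explicit rational expressions in $(\varphi,\theta,\varphi',\theta')$ near $\mathbf{x}_0$, comparing with the flat Kiselev–Šverák kernel. Curvature enters only as bounded multiplicative factors $1+O(\varphi'+\theta')$.

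Finally, summing the pieces gives \eqref{velocity estimate1} and \eqref{velocity estimate2}, with the stated bounds on $B_\varphi$ and $B_\theta$ and a universal constant $C_1$.
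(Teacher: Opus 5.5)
Your architecture matches the paper's: you expand the symmetrized kernels about $\mathbf{x}_0$ on $Q(3\varphi/2,3\theta/2)$, you treat the strip $\{\varphi'<3\varphi/2\}$ as harmless for $\tilde u_\varphi$ because $y_2\lesssim\varphi$, and you let the strip $\{\varphi'>3\varphi/2,\ \theta'<3\theta/2\}$ produce $\ln(1+\theta/\varphi)$. The genuine gap is the gradient alternative, where you use the wrong reflection, so the claimed bound does not follow.

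In that strip the first kernel of \eqref{uvarphi} is comparable to $\varphi'|\theta'-\theta|/(\varphi'^2+(\theta'-\theta)^2)^2$. For $\varphi\ll\varphi'\ll\theta$ its $\theta'$-integral is $\sim 1/\varphi'$, and it is concentrated where $|\theta'-\theta|\lesssim\varphi'$, i.e.\ where $\theta'\approx\theta$. So the factor $\theta'$ you get from oddness in $\theta$ is $\approx\theta$ exactly where the logarithm lives. The $\varphi'$-integrand becomes $\theta/\varphi'$, and you end up with $M\theta\ln(1+\theta/\varphi)$, where $M:=\|\nabla\tilde\omega\|_{L^\infty([0,3\theta/2]^2)}$. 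This is not $\lesssim\|\tilde\omega\|_{L^\infty}+M\theta$: take $M\theta=\|\tilde\omega\|_{L^\infty}$ and let $\theta/\varphi\to\infty$. In fact the only property you use, $|\tilde\omega|\le M\theta'$, is also satisfied by $M\theta'\,\mathrm{sgn}(\theta-\theta')$, for which the strip integral really has that size.

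What is needed is the reflection across the meridian. Since $\tilde\omega(0,\theta')=0$, you get $|\tilde\omega(\varphi',\theta')|\le M\varphi'$ on $[0,3\theta/2]^2$. This turns the $\varphi'$-integrand $1/\varphi'$ into $O(M)$ and gives $O(M\theta)$ over $\varphi'\le 3\theta/2$. This is what the paper does on $Q_4=\{3\varphi/2\le\varphi'\le 3\theta/2,\ \theta'\le 3\theta/2\}$; the remaining part $Q_3'$ with $\varphi'\ge 3\theta/2$ is $O(1)$. There it writes $\omega=\omega_1+\omega_2$, with $\omega_1$ the values on the latitude $\theta'=\theta$, so that $|\omega_1|\le M\varphi'$ by oddness in $\varphi$ and $|\omega_2|\le M|\theta'-\theta|$.

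For $B_\theta$ the roles are exchanged. The logarithm comes from $|\varphi'-\varphi|\lesssim\theta'$ with $\varphi'\approx\varphi$, so one needs oddness in $\theta$ (the paper's $|\omega_3|\le M'\theta'$). Your ``symmetric'' argument would use oddness in $\varphi$ and fail in the same way.

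A second, smaller problem is the ball $D_1=\{|\mathbf{y}-\mathbf{x}_0|\le C(\varphi+\theta)\}$. Your bound $\lesssim|\mathbf{x}-\mathbf{y}|^{-1}(\varphi+\theta)^{-1}$ for the kernels there is false when $\varphi\ll\theta$. Since $|\mathbf{x}-\tilde{\mathbf{y}}|\sim\varphi$ for $\mathbf{y}$ near $\mathbf{x}$, the $\tilde u_\varphi$-kernel is only $\lesssim|\mathbf{x}-\mathbf{y}|^{-1}\varphi^{-1}$ there. It also has size $|\mathbf{x}-\mathbf{y}|^{-2}$ on a fixed proportion of each annulus $\varphi\lesssim|\mathbf{x}-\mathbf{y}|\lesssim\theta$. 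So the entire logarithm sits inside $D_1$, while the strips taken outside the ball are in fact $O(1)$. As written, your decomposition declares the only hard region harmless, and your strip computation over $\varphi'\in(\varphi,\theta)$ contradicts taking the strips outside the ball.

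The ball is also unnecessary. For $\mathbf{y}\in Q(3\varphi/2,3\theta/2)$ one has $\varphi'-\varphi>\varphi'/3$ and $\theta'-\theta>\theta'/3$, hence $|\mathbf{x}-\mathbf{y}|\gtrsim|\mathbf{y}-\mathbf{x}_0|\gtrsim|\mathbf{x}-\mathbf{x}_0|$. So the expansion with error $O(|\mathbf{x}-\mathbf{x}_0|/|\mathbf{y}-\mathbf{x}_0|^3)$ holds on all of $Q$. The paper simply partitions $\mathbb{S}^2_q$ into $Q(3\varphi/2,3\theta/2)$, $\{\varphi'\le 3\varphi/2\}$ and $\{\varphi'\ge 3\varphi/2,\ \theta'\le 3\theta/2\}$, and handles the near-diagonal region inside the last piece.
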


\begin{proof}
    For any $\mathbf{x}\in\{(\cos\theta\cos\varphi,\cos\theta\sin\varphi,\sin\theta):(\varphi,\theta)\in[-1,1]^2\}$ and $\mathbf{y}\in\mathbb{S}^2$, the following relations hold:
    \begin{equation}\label{distance}
        |\mathbf{x}-\mathbf{y}|\sim\sqrt{(\varphi-\varphi')^2+(\theta-\theta')^2},\quad |x_1-y_1|+|x_2-y_2|\sim|\varphi-\varphi'|,\quad |x_3-y_3|\sim|\theta-\theta'|.
    \end{equation}
    We first consider $\tilde{u}_\varphi$ and begin by partitioning $\mathbb{S}^2_{q}$ into three disjoint regions:
    \[
    Q\left(\frac{3\varphi}{2},\frac{3\theta}{2}\right),\qquad Q_2,\qquad Q_3,
    \]
    where
    \begin{equation*}
        \begin{split}
            &Q_2=\left\{(\cos\theta'\cos\varphi',\cos\theta'\sin\varphi',\sin\theta'):\varphi'\in\left(0,\frac{3\varphi}{2}\right],\,\theta'\in\left(0,\frac{\pi}{2}\right)\right\},\\
            &Q_3=\left\{(\cos\theta'\cos\varphi',\cos\theta'\sin\varphi',\sin\theta'):\varphi'\in\left[\frac{3\varphi}{2},\pi\right),\,\theta'\in\left(0,\frac{3\theta}{2}\right]\right\}.
        \end{split}
    \end{equation*}

    \textbf{Estimates on $Q(3\varphi/2,3\theta/2)$.} 
    In this region we have
    \begin{equation*}
        |\mathbf{y}-\mathbf{x}_0|\lesssim|\mathbf{x}-\mathbf{y}|,\quad |\mathbf{y}-\mathbf{x}_0|\lesssim|\mathbf{x}-\tilde{\mathbf{y}}|.
    \end{equation*}
    Together with $x_3\leq|\mathbf{x}-\mathbf{x}_0|$, $y_2\leq|\mathbf{y}-\mathbf{x}_0|$ and \eqref{distance}, these inequalities yield
    \begin{equation*}
    \begin{split}
         \int_{Q\left(\frac{3\varphi}{2},\frac{3\theta}{2}\right)}\frac{2x_3y_2}{|\mathbf{x}-\mathbf{y}|^2|\mathbf{x}-\tilde{\mathbf{y}}|^2}\omega(\mathbf{y})d\sigma(\mathbf{y})&\lesssim\|\tilde{\omega}\|_{L^\infty}\int_{Q\left(\frac{3\varphi}{2},\frac{3\theta}{2}\right)}\frac{|\mathbf{x}-\mathbf{x}_0|}{|\mathbf{y}-\mathbf{x}_0|^3}d\sigma(\mathbf{y})\\
         &\lesssim\|\tilde{\omega}\|_{L^\infty}\int^{\frac{\pi}{2}}_{\frac{3\theta}{2}}\int^{\pi}_{\frac{3\varphi}{2}}\frac{(\varphi^2+\theta^2)^{\frac{1}{2}}}{(\varphi'^{\mkern 2mu 2}+\theta'^{\mkern 2mu 2})^{\frac{3}{2}}}d\varphi'd\theta'\\
         &\lesssim\|\tilde{\omega}\|_{L^\infty}.
    \end{split}       
    \end{equation*}
    On the other hand, a direct expansion gives
    \begin{equation*}
    \begin{split}
        \frac{1}{|\mathbf{x}-\mathbf{y}|^2|\mathbf{x}-\tilde{\mathbf{y}}|^2}-\frac{1}{|\mathbf{y}-\mathbf{x}_0|^4}=&O\left(\frac{|\mathbf{y}-\mathbf{x}_0|^4-|(\mathbf{x}-\mathbf{x}_0)-(\mathbf{y}-\mathbf{x}_0)|^2|(\mathbf{x}-\mathbf{x}_0)-(\tilde{\mathbf{y}}-\mathbf{x}_0)|^2}{|\mathbf{y}-\mathbf{x}_0|^8}\right)\\
        =&O\left(\frac{|\mathbf{x}-\mathbf{x}_0||\mathbf{y}-\mathbf{x}_0|^3}{|\mathbf{y}-\mathbf{x}_0|^8}\right)\\
        =&O\left(\frac{|\mathbf{x}-\mathbf{x}_0|}{|\mathbf{y}-\mathbf{x}_0|^5}\right),
    \end{split}        
    \end{equation*}
    which implies that
    \begin{equation*}
    \begin{split}
        \int_{Q\left(\frac{3\varphi}{2},\frac{3\theta}{2}\right)}\left(\frac{1}{|\mathbf{x}-\mathbf{y}|^2|\mathbf{x}-\tilde{\mathbf{y}}|^2}-\frac{1}{|\mathbf{y}-\mathbf{x}_0|^4}\right)y_2y_3\omega(\mathbf{y})d\sigma(\mathbf{y})
        \lesssim&\int_{Q\left(\frac{3\varphi}{2},\frac{3\theta}{2}\right)}\frac{|\mathbf{x}-\mathbf{x}_0||\mathbf{y}-\mathbf{x}_0|^2}{|\mathbf{y}-\mathbf{x}_0|^5}\omega(\mathbf{y})d\sigma(\mathbf{y})\\
        \leq&\|\tilde{\omega}\|_{L^\infty}\int_{Q\left(\frac{3\varphi}{2},\frac{3\theta}{2}\right)}\frac{|\mathbf{x}-\mathbf{x}_0|}{|\mathbf{y}-\mathbf{x}_0|^3}d\sigma(\mathbf{y})\\
        \lesssim&\|\tilde{\omega}\|_{L^\infty}.
    \end{split}         
    \end{equation*}
    Combining the above estimates, we obtain
    \begin{equation}\label{Q1f1}
        \int_{Q\left(\frac{3\varphi}{2},\frac{3\theta}{2}\right)}\frac{2y_2(x_3-y_3)}{|\mathbf{x}-{\mathbf{y}}|^2|\mathbf{x}-{\tilde{\mathbf{y}}}|^2}\omega(\mathbf{y})d\sigma(\mathbf{y})=-2\int_{Q\left(\frac{3\varphi}{2},\frac{3\theta}{2}\right)}\frac{y_2y_3}{|\mathbf{y}-\mathbf{x}_0|^4}\omega(\mathbf{y})d\sigma(\mathbf{y})+O(\|\tilde{\omega}\|_{L^\infty}).
    \end{equation}
    Similarly, for the remaining part of the integral over $Q\left({3\varphi}/{2},{3\theta}/{2}\right)$ in $\tilde{u}_\varphi(\varphi,\theta)$, we use the bounds
    \begin{equation*}
        |\mathbf{y}-\mathbf{x}_0|\lesssim|\mathbf{x}-\bar{\mathbf{y}}|,\quad |\mathbf{y}-\mathbf{x}_0|\lesssim|\mathbf{x}-\bar{\tilde{\mathbf{y}}}|
    \end{equation*}
    and the expansion
    \begin{equation*}
        \frac{1}{|\mathbf{x}-\bar{\mathbf{y}}|^2|\mathbf{x}-\bar{\tilde{\mathbf{y}}}|^2}-\frac{1}{|\mathbf{y}-\mathbf{x}_0|^4}=O\left(\frac{|\mathbf{x}-\mathbf{x}_0|}{|\mathbf{y}-\mathbf{x}_0|^5}\right).
    \end{equation*}
    Consequently,
    \begin{equation}\label{Q1f2}
        -\int_{Q\left(\frac{3\varphi}{2},\frac{3\theta}{2}\right)}\frac{2y_2(x_3+y_3)}{|\mathbf{x}-\bar{{\mathbf{y}}}|^2|\mathbf{x}-{\bar{\tilde{\mathbf{y}}}}|^2}\omega(\mathbf{y})d\sigma(\mathbf{y})=-2\int_{Q\left(\frac{3\varphi}{2},\frac{3\theta}{2}\right)}\frac{y_2y_3}{|\mathbf{y}-\mathbf{x}_0|^4}\omega(\mathbf{y})d\sigma(\mathbf{y})+O(\|\tilde{\omega}\|_{L^\infty}).
    \end{equation}

    \textbf{Estimates on $Q_2$ and $Q_3$.} 
    We now turn to the integral over $Q_2$. Using \eqref{distance} and performing the change of variables $(\varphi'',\theta'')=(\varphi'-\varphi,\theta'-\theta)$, we obtain
    \begin{equation}\label{Q2f1}
    \begin{split}
         \int_{Q_2}\frac{2y_2(x_3-y_3)}{|\mathbf{x}-{\mathbf{y}}|^2|\mathbf{x}-{\tilde{\mathbf{y}}}|^2}\omega(\mathbf{y})d\sigma(\mathbf{y})&\lesssim\|\tilde{\omega}\|_{L^\infty}\int^\frac{\pi}{2}_0\int^\frac{3\varphi}{2}_0\frac{x_2|\theta'-\theta|\cos\theta'}{\big((\varphi'-\varphi)^2+(\theta'-\theta)^2\big)\big(x_2^2+(\theta'-\theta)^2\big)}d\varphi'd\theta'\\
         &\lesssim\|\tilde{\omega}\|_{L^\infty}\int^\frac{\pi}{2}_0\int^\varphi_0\frac{\cos\theta\sin\varphi\,\theta''}{\big(\varphi''^{\mkern 1mu 2}+\theta''^{\mkern 1mu 2}\big)\big((\cos\theta\sin\varphi)^2+\theta''^{\mkern 1mu 2}\big)}d\varphi''d\theta''\\
         &\lesssim\|\tilde{\omega}\|_{L^\infty}\int^\frac{\pi}{2}_0\int^\varphi_0\frac{\varphi\,\theta''}{\big(\varphi''^{\mkern 1mu 2}+\theta''^{\mkern 1mu 2}\big)\big(\varphi^2+\theta''^{\mkern 1mu 2}\big)}d\varphi''d\theta''\\
         &\lesssim\|\tilde{\omega}\|_{L^\infty}\int^\frac{\pi}{2}_0\frac{\varphi}{\varphi^2+\theta''^{\mkern 1mu 2}}\arctan\frac{\varphi}{\theta''}d\theta''\\
         &\lesssim\|\tilde{\omega}\|_{L^\infty}\arctan\frac{1}{\varphi}\\
         &\lesssim\|\tilde{\omega}\|_{L^\infty}.
    \end{split}       
    \end{equation}
    Analogously, with the change $(\varphi'',\theta'')=(\varphi'-\varphi,\theta'+\theta)$, we get
    \begin{equation}\label{Q2f2}
    \begin{split}
        \int_{Q_2}\frac{2y_2(x_3+y_3)}{|\mathbf{x}-\bar{{\mathbf{y}}}|^2|\mathbf{x}-{\bar{\tilde{\mathbf{y}}}}|^2}\omega(\mathbf{y})d\sigma(\mathbf{y})&\lesssim\|\tilde{\omega}\|_{L^\infty}\int^{\pi}_0\int^\varphi_0\frac{\varphi\,\theta''}{\big(\varphi''^{\mkern 1mu 2}+\theta''^{\mkern 1mu 2}\big)\big(\varphi^2+\theta''^{\mkern 1mu 2}\big)}d\varphi''d\theta''\\
        &\lesssim\|\tilde{\omega}\|_{L^\infty}.
    \end{split}        
    \end{equation}
    Next, we consider the integral over $Q_3$. Applying \eqref{distance} and the change $(\varphi'',\theta'')=(\varphi'-\varphi,\theta'-\theta)$ yields
    \begin{equation}\label{Q3f1}
        \begin{split}
            \int_{Q_3}\frac{2y_2(x_3-y_3)}{|\mathbf{x}-{\mathbf{y}}|^2|\mathbf{x}-{\tilde{\mathbf{y}}}|^2}\omega(\mathbf{y})d\sigma(\mathbf{y})&\lesssim\|\tilde{\omega}\|_{L^\infty}\int^\frac{3\theta}{2}_0\int^\pi_\frac{3\varphi}{2}\frac{\varphi'\,|\theta'-\theta|\cos\theta'}{\big((\varphi'-\varphi)^2+(\theta'-\theta)^2\big)^2}d\varphi'd\theta'\\            &\lesssim\|\tilde{\omega}\|_{L^\infty}\int^\theta_0\int^\pi_\frac{\varphi}{2}\frac{\varphi''\,\theta''}{\varphi''^{\mkern 1mu 2}+\theta''^{\mkern 1mu 2}}d\varphi''d\theta''\\
            &\lesssim\|\tilde{\omega}\|_{L^\infty}\int^\theta_0\frac{\theta''}{\varphi^2+\theta''^{\mkern 1mu 2}}d\theta''\\
            &\lesssim\|\tilde{\omega}\|_{L^\infty}\left(\ln\left(1+\frac{\theta}{\varphi}\right)+1\right).
        \end{split}
    \end{equation}
    When $\varphi\geq\theta$, we have $\ln(1+\theta/\varphi)\leq\ln2$ in the above expression, so the bound is of order $\|\tilde{\omega}\|_{L^\infty}$. If instead $\varphi<\theta$, we further partition $Q_3$ into the following two subregions:
    \begin{equation*}
        \begin{split}
            &Q_3'=\left\{(\cos\theta'\cos\varphi',\cos\theta'\sin\varphi',\sin\theta'):\varphi'\in\left[\frac{3\theta}{2},\pi\right),\,\theta'\in\left(0,\frac{3\theta}{2}\right]\right\},\\
            &Q_4=\left\{(\cos\theta'\cos\varphi',\cos\theta'\sin\varphi',\sin\theta'):\varphi'\in\left[\frac{3\varphi}{2},\frac{3\theta}{2}\right],\,\theta'\in\left(0,\frac{3\theta}{2}\right]\right\}.
        \end{split}
    \end{equation*}
    Following the same computation as in \eqref{Q3f1}, we obtain
    \begin{equation}\label{Q3'f1}
        \int_{Q_3'}\frac{2y_2(x_3-y_3)}{|\mathbf{x}-{\mathbf{y}}|^2|\mathbf{x}-{\tilde{\mathbf{y}}}|^2}\omega(\mathbf{y})d\sigma(\mathbf{y})\lesssim\|\tilde{\omega}\|_{L^\infty}(1+\ln2).
    \end{equation}
    For the region $Q_4$, set $M:=\|\nabla\tilde{\omega}\|_{L^\infty([0,3\theta/2]^2)}$ and decompose $\omega$ as
    \begin{equation*}
        \omega(\mathbf{y})=\omega_1(y_1,y_2)+\omega_2(\mathbf{y}),
    \end{equation*}
    where $\omega_1(y_1,y_2)=\omega\left(\frac{y_1\sqrt{1-x_3^2}}{\sqrt{y_1^2+y_2^2}},\frac{y_2\sqrt{1-x_3^2}}{\sqrt{y_1^2+y_2^2}},x_3\right)$ (hence $\tilde{\omega}_1(\varphi')=\tilde{\omega}(\varphi',\theta)$). Then we have
    \begin{equation*}
        |\omega_2(\mathbf{y})|=|\omega(\mathbf{y})-\omega_1(y_1,y_2)|\leq M|\theta-\theta'|.
    \end{equation*}
    By the odd symmetry, it follows that
    \begin{equation*}
        |\omega_1(y_1,y_2)|=\left|\omega_1(y_1,y_2)-\omega_1\left(\sqrt{1-x_3^2},0\right)\right|\leq M|\varphi'|.
    \end{equation*}
    Therefore, using the change of variable $\theta''=\theta'-\theta$ together with $y_2\leq|\mathbf{x}-\tilde{\mathbf{y}}|$ and $|x_3-y_3|\leq|\mathbf{x}-\mathbf{y}|$, we obtain
    \begin{equation}\label{Q4w1}
        \begin{split}
            \int_{Q_4}\frac{y_2(x_3-y_3)}{|\mathbf{x}-\mathbf{y}|^2|\mathbf{x}-\tilde{\mathbf{y}}|^2}\omega_1(y_1,y_2)d\sigma(\mathbf{y})\lesssim&\int^\frac{3\theta}{2}_0\int^\frac{3\theta}{2}_\frac{3\varphi}{2}\frac{M\varphi'}{\varphi'^{\mkern 2mu 2}+(\theta'-\theta)^2}d\varphi'd\theta'\\
            \leq&\int^\theta_0\int^\frac{3\theta}{2}_\frac{3\varphi}{2}\frac{M\varphi'}{\varphi'^{\mkern 2mu 2}+\theta''^{\mkern 1mu 2}}d\varphi'd\theta''\\
            =&\frac{M}{2}\left(\theta\ln\left(\theta^2+\frac{9\theta^2}{4}\right)-2\theta+3\theta\arctan\frac{2\theta}{3\theta}\right)\\
            &-\frac{M}{2}\left(\theta\ln\left(\theta^2+\frac{9\varphi^2}{4}\right)-2\theta+3\varphi\arctan\frac{2\theta}{3\varphi}\right)\\
            \lesssim&M\theta,
        \end{split}
    \end{equation}
    and
    \begin{equation}\label{Q4w2}
    \begin{split}
        \int_{Q_4}\frac{y_2(x_3-y_3)}{|\mathbf{x}-\mathbf{y}|^2|\mathbf{x}-\tilde{\mathbf{y}}|^2}\omega_2(\mathbf{y})d\sigma(\mathbf{y})\lesssim&M\int^\theta_0\int^\frac{3\theta}{2}_\frac{3\varphi}{2}\frac{\varphi'\theta''^{\mkern 1mu 2}}{\big(\varphi'^{\mkern 2mu 2}+\theta''^{\mkern 1mu 2}\big)^2}d\varphi'd\theta''\\
        \lesssim&M\int^\theta_0\frac{\theta''^{\mkern 1mu 2}}{\varphi^2+\theta''^{\mkern 1mu 2}}d\theta''\\
        \lesssim&M\theta.
    \end{split}        
    \end{equation}
    For the second term in the integral on the right-hand side of \eqref{uvarphi}, when $\varphi<\theta$ we do not need to split $Q_3$ into $Q_3'$ and $Q_4$. Instead, we let $(\varphi'',\theta'')=(\varphi'-\varphi,\theta'+\theta)$ and apply \eqref{distance} again to obtain
    \begin{equation}\label{Q3f2}
        \begin{split}
            \int_{Q_3}\frac{2y_2(x_3+y_3)}{|\mathbf{x}-\bar{{\mathbf{y}}}|^2|\mathbf{x}-{\bar{\tilde{\mathbf{y}}}}|^2}\omega(\mathbf{y})d\sigma(\mathbf{y})&\lesssim\|\tilde{\omega}\|_{L^\infty}\int^\frac{3\theta}{2}_0\int^\pi_\frac{3\varphi}{2}\frac{(\varphi'-\varphi)(\theta'+\theta)\cos\theta'}{\big((\varphi'-\varphi)^2+(\theta'-\theta)^2\big)^2}d\varphi'd\theta'\\
            &\leq\|\tilde{\omega}\|_{L^\infty}\int^\frac{5\theta}{2}_\theta\int^\pi_\frac{\varphi}{2}\frac{\varphi''\theta''}{\big(\varphi''^{\mkern 1mu 2}+\theta''^{\mkern 1mu 2}\big)^2}d\varphi''d\theta''\\
            &\leq\|\tilde{\omega}\|_{L^\infty}\int^\frac{5\theta}{2}_\theta\frac{4\theta''}{\varphi^2+4\theta''^2}d\theta''\\
            &=\frac{1}{2}\|\tilde{\omega}\|_{L^\infty}\ln\frac{\varphi^2+25\theta^2}{\varphi^2+4\theta^2}\\
            &\lesssim\|\tilde{\omega}\|_{L^\infty}.
        \end{split}
    \end{equation}
    Combining \eqref{Q1f1}–\eqref{Q3f2}, we finally arrive at \eqref{velocity estimate1}.

    \textbf{Estimates for $\tilde{u}_\theta$.} 
    The analysis for $\tilde{u}_\theta(\varphi,\theta)$ proceeds similarly. We begin by partitioning $\mathbb{S}^2$ into $Q(3\varphi/2,3\theta/2)$, $\tilde{Q}_2$ and $\tilde{Q}_3$, where
    \begin{equation*}
        \begin{split}
            &\tilde{Q}_2=\left\{(\cos\theta'\cos\varphi',\cos\theta'\sin\varphi',\sin\theta'):\varphi'\in\left(0,\pi\right),\,\theta'\in\left(0,\frac{3\theta}{2}\right]\right\},\\
            &\tilde{Q}_3=\left\{(\cos\theta'\cos\varphi',\cos\theta'\sin\varphi',\sin\theta'):\varphi'\in\left(0,\frac{3\varphi}{2}\right],\,\theta'\in\left[\frac{3\theta}{2},\frac{\pi}{2}\right)\right\}.
        \end{split}
    \end{equation*}
    On $Q(3\varphi/2,3\theta/2)$, analogous to the previous calculation for $\tilde{u}_\varphi$, we have
    \begin{equation}\label{Q1f1'}
    \begin{split}
        \int_{Q\left(\frac{3\varphi}{2},\frac{3\theta}{2}\right)}\frac{y_3\big((x_1-y_1)y_2-(x_2-y_2)y_1\big)}{|\mathbf{x}-{\mathbf{y}}|^2|\mathbf{x}-{\bar{\mathbf{y}}}|^2}\omega(\mathbf{y})d\sigma(\mathbf{y})&=\int_{Q\left(\frac{3\varphi}{2},\frac{3\theta}{2}\right)}\frac{x_1y_2y_3}{|\mathbf{x}-\mathbf{y}|^2|\mathbf{x}-\bar{\mathbf{y}}|^2}\omega(\mathbf{y})d\sigma(\mathbf{y})+O(\|\tilde{\omega}\|_{L^\infty})\\
        &=\int_{Q\left(\frac{3\varphi}{2},\frac{3\theta}{2}\right)}\frac{x_1y_2y_3}{|\mathbf{y}-\mathbf{x}_0|^4}\omega(\mathbf{y})d\sigma(\mathbf{y})+O(\|\tilde{\omega}\|_{L^\infty}),
    \end{split}        
    \end{equation}
    and
    \begin{equation}\label{Q1f2'}
    \begin{split}
        \int_{Q\left(\frac{3\varphi}{2},\frac{3\theta}{2}\right)}\frac{y_3\big((x_2+y_2)x_1-(x_1-y_1)x_2\big)}{|\mathbf{x}-\tilde{\mathbf{y}}|^2|\mathbf{x}-\tilde{\bar{\mathbf{y}}}|^2}\omega(\mathbf{y})d\sigma(\mathbf{y})&=\int_{Q\left(\frac{3\varphi}{2},\frac{3\theta}{2}\right)}\frac{x_1y_2y_3}{|\mathbf{x}-\tilde{\mathbf{y}}|^2|\mathbf{x}-\tilde{\bar{\mathbf{y}}}|^2}\omega(\mathbf{y})d\sigma(\mathbf{y})+O(\|\tilde{\omega}\|_{L^\infty})\\
        &=\int_{Q\left(\frac{3\varphi}{2},\frac{3\theta}{2}\right)}\frac{x_1y_2y_3}{|\mathbf{y}-\mathbf{x}_0|^4}\omega(\mathbf{y})d\sigma(\mathbf{y})+O(\|\tilde{\omega}\|_{L^\infty}).
    \end{split}        
    \end{equation}
    Next, consider the integral over $\tilde{Q}_2$. With the change of variables $(\varphi'',\theta'')=(\varphi'-\varphi,\theta'-\theta)$, we obtain
    \begin{equation*} 
        \begin{split}
            \int_{\tilde{Q}_2}\frac{y_3\big((x_1-y_1)y_2-(x_2-y_2)y_1\big)}{|\mathbf{x}-{\mathbf{y}}|^2|\mathbf{x}-{\bar{\mathbf{y}}}|^2}\omega(\mathbf{y})d\sigma(\mathbf{y})&\leq\|\tilde{\omega}\|_{L^\infty}\int_{\tilde{Q}_2}\frac{x_3(|x_1-y_1|+|x_2-y_2|)}{|\mathbf{x}-{\mathbf{y}}|^2|\mathbf{x}-{\bar{\mathbf{y}}}|^2}d\sigma(\mathbf{y})\\
            &\lesssim\|\tilde{\omega}\|_{L^\infty}\int^\pi_0\int^\theta_0\frac{\varphi''\sin\theta}{(\varphi''^{\mkern 1mu 2}+\theta''^{\mkern 1mu 2})\big((\sin\theta)^2+\varphi''^{\mkern 1mu 2}\big)}d\theta''d\varphi''\\
            &\lesssim\|\tilde{\omega}\|_{L^\infty}\int^\pi_0\frac{\theta\,\varphi''}{\varphi''(\theta^2+\varphi''^{\mkern 1mu 2})}\arctan\frac{\theta}{\varphi''}d\varphi''\\
            &\lesssim\|\tilde{\omega}\|_{L^\infty}.
        \end{split}
    \end{equation*}
    Then, letting $(\varphi'',\theta'')=(\varphi'+\varphi,\theta'-\theta)$, we get
    \begin{equation*} 
        \begin{split}
            \int_{\tilde{Q}_2}\frac{y_3\big((x_2+y_2)x_1-(x_1-y_1)x_2\big)}{|\mathbf{x}-\tilde{\mathbf{y}}|^2|\mathbf{x}-\tilde{\bar{\mathbf{y}}}|^2}\omega(\mathbf{y})d\sigma(\mathbf{y})&\lesssim\|\tilde{\omega}\|_{L^\infty}\int_{\tilde{Q}_2}\frac{x_3((y_2+x_2)+|y_1-x_1|)}{|\mathbf{x}-\tilde{\mathbf{y}}|^2|\mathbf{x}-\tilde{\bar{\mathbf{y}}}|^2}d\sigma(\mathbf{y})\\           &\lesssim\|\tilde{\omega}\|_{L^\infty}\int^{2\pi}_0\int^\theta_0\frac{\varphi''\sin\theta}{(\varphi''^{\mkern 1mu 2}+\theta''^{\mkern 1mu 2})\big((\sin\theta)^2+\varphi''^{\mkern 1mu 2}\big)}d\theta''d\varphi''\\
            &\lesssim\|\tilde{\omega}\|_{L^\infty}.
        \end{split}
    \end{equation*}
    For the integral over $\tilde{Q}_3$, we again employ the change $(\varphi'',\theta'')=(\varphi'-\varphi,\theta'-\theta)$ to obtain
    \begin{equation}\label{Q3f1'}
        \begin{split}
            \int_{\tilde{Q}_3}\frac{y_3\big((x_1-y_1)y_2-(x_2-y_2)y_1\big)}{|\mathbf{x}-{\mathbf{y}}|^2|\mathbf{x}-{\bar{\mathbf{y}}}|^2}\omega(\mathbf{y})d\sigma(\mathbf{y})&\lesssim\|\tilde{\omega}\|_{L^\infty}\int^\varphi_0\int^\frac{\pi}{2}_\frac{\theta}{2}\frac{\varphi''\theta''}{(\varphi''^{\mkern 1mu 2}+\theta''^{\mkern 1mu 2})^2}d\theta''d\varphi''\\
            &\lesssim\|\tilde{\omega}\|_{L^\infty}\int^\varphi_0\frac{4\varphi''}{\theta^2+4\varphi''^{\mkern 1mu 2}}d\varphi''\\
            &\lesssim\|\tilde{\omega}\|_{L^\infty}\left(\ln\left(1+\frac{\varphi}{\theta}\right)+1\right).
        \end{split}
    \end{equation}
    When $\varphi\leq\theta$, we have $\ln(1+\varphi/\theta)\leq\ln2$, so the bound is of order $\|\tilde{\omega}\|_{L^\infty}$. If instead $\varphi>\theta$, we further split $\tilde{Q}_3$ into
    \begin{equation*}
        \begin{split}
            &\tilde{Q}_3'=\left\{(\cos\theta'\cos\varphi',\cos\theta'\sin\varphi',\sin\theta'):\varphi'\in\left(0,\frac{3\varphi}{2}\right],\,\theta'\in\left[\frac{3\varphi}{2},\frac{\pi}{2}\right)\right\},\\
            &\tilde{Q}_4=\left\{(\cos\theta'\cos\varphi',\cos\theta'\sin\varphi',\sin\theta'):\varphi'\in\left(0,\frac{3\varphi}{2}\right],\,\theta'\in\left[\frac{3\theta}{2},\frac{3\varphi}{2}\right]\right\}.
        \end{split}
    \end{equation*}
    Following the computation in \eqref{Q3f1'}, we obtain
    \begin{equation}\label{Q3'f1'}
        \int_{\tilde{Q}_3'}\frac{y_3\big((x_1-y_1)y_2-(x_2-y_2)y_1\big)}{|\mathbf{x}-{\mathbf{y}}|^2|\mathbf{x}-{\bar{\mathbf{y}}}|^2}\omega(\mathbf{y})d\sigma(\mathbf{y})\lesssim\|\tilde{\omega}\|_{L^\infty}(1+\ln2).
    \end{equation}
    In $\tilde{Q}_4$, set $M':=\|\nabla\tilde{\omega}\|_{L^\infty([0,3\varphi/2]^2)}$ and decompose $\omega$ as
    \begin{equation*}
        \omega(\mathbf{y})=\omega_3(y_3)+\omega_4(\mathbf{y}),
    \end{equation*}
    where 
    \begin{equation*}
        \omega_3(y_3)=\omega\left(x_1\frac{\sqrt{1-y_3^2}}{\sqrt{1-x_3^2}},x_2\frac{\sqrt{1-y_3^2}}{\sqrt{1-x_3^2}},y_3\right),
    \end{equation*}
    so that $\tilde{\omega}_3(\theta')=\tilde{\omega}(\varphi,\theta')$. Then we have
    \begin{equation*}
        |\omega_4(\mathbf{y})|=|\omega(\mathbf{y})-\omega_3(y_3)|\leq M'|\varphi'-\varphi|.
    \end{equation*}
    By odd symmetry,
    \begin{equation*}
        |\omega_3(y_3)|=|\omega_3(y_3)-\omega_3(0)|\leq M'|\theta'|.
    \end{equation*}
    Setting $\varphi''=\varphi'-\varphi$, then we obtain
    \begin{equation}\label{Q4w1'}
        \begin{split}
            \int_{\tilde{Q}_4}\frac{y_3\big((x_1-y_1)y_2-(x_2-y_2)y_1\big)}{|\mathbf{x}-{\mathbf{y}}|^2|\mathbf{x}-{\bar{\mathbf{y}}}|^2}\omega_3(y_3)d\sigma(\mathbf{y})&\leq\int_{\tilde{Q}_4}\frac{y_3(|x_1-y_1|+|x_2-y_2|)}{|\mathbf{x}-{\mathbf{y}}|^2|\mathbf{x}-{\bar{\mathbf{y}}}|^2}|\omega_3(y_3)|d\sigma(\mathbf{y})\\
            &\lesssim\int^\varphi_0\int^\frac{3\varphi}{2}_\frac{3\theta}{2}\frac{M'\theta'}{\varphi''^{\mkern 1mu 2}+\theta'^{\mkern 2mu 2}}d\theta''d\varphi'\\
            &\lesssim M'\varphi,
        \end{split}
    \end{equation}
    and
    \begin{equation}\label{Q4w2'}
    \begin{split}
        \int_{\tilde{Q}_4}\frac{y_3\big((x_1-y_1)y_2-(x_2-y_2)y_1\big)}{|\mathbf{x}-{\mathbf{y}}|^2|\mathbf{x}-{\bar{\mathbf{y}}}|^2}\omega_4(\mathbf{y})d\sigma(\mathbf{y})&\lesssim M'\int^\varphi_0\int^\frac{3\varphi}{2}_\frac{3\theta}{2}\frac{\theta'\varphi''}{(\theta'^{\mkern 2mu 2}+\varphi''^{\mkern 1mu 2})^2}d\theta'd\varphi''\\
        &\lesssim M'\varphi.
    \end{split}        
    \end{equation}
    For the second term in the integral on the right-hand side of \eqref{utheta}, when $\varphi>\theta$ we can still integrate over $\tilde{Q}_3$ without splitting. Using the change $(\varphi'',\theta'')=(\varphi'+\varphi,\theta'-\theta)$, we get
    \begin{equation}\label{Q3'f2'}
        \begin{split}
            \int_{\tilde{Q}_3}\frac{y_3\big((x_2+y_2)x_1-(x_1-y_1)x_2\big)}{|\mathbf{x}-\tilde{\mathbf{y}}|^2|\mathbf{x}-\tilde{\bar{\mathbf{y}}}|^2}\omega(\mathbf{y})d\sigma(\mathbf{y})&\lesssim\|\tilde{\omega}\|_{L^\infty}\int^{\frac{\pi}{2}}_{\frac{\theta}{2}}\int^\frac{5\varphi}{2}_\varphi\frac{\varphi''\theta''}{(\varphi''^{\mkern 1mu 2}+\theta''^{\mkern 1mu 2})^2}d\varphi''d\theta''\\
            &\lesssim\|\tilde{\omega}\|_{L^\infty}.
        \end{split}
    \end{equation}
    Finally, combining \eqref{Q1f1'}–\eqref{Q3'f2'}, we arrive at \eqref{velocity estimate2}.
\end{proof}

\begin{remark}
    The region $Q(3\varphi/2,3\theta/2)$ in Lemma \ref{Velocity estimate} can be replaced by 
    \begin{equation*}
        \tilde{Q}\Big(\sqrt{\varphi^2+\theta^2}\Big):=\left\{(\cos\theta'\cos\varphi',\cos\theta'\sin\varphi',\sin\theta'):\varphi'\in\left(0,\pi\right),\,\theta'\in\left(0,\frac{\pi}{2}\right),\,\sqrt{\varphi'^{\mkern 2mu 2}+\theta'^{\mkern 2mu 2}}>\sqrt{\varphi^2+\theta^2}\right\},
    \end{equation*}
    because the difference between these two domains is contained in $D_1\cup D_2$, where
    \begin{equation*}
        \begin{split}
            &D_1=\left\{(\cos\theta'\cos\varphi',\cos\theta'\sin\varphi',\sin\theta'):\varphi'\in\left(0,\min\left\{\pi,\frac{3}{2}\sqrt{\varphi^2+\theta^2}\right\}\right],\,\theta'\in\left[\frac{1}{2}\sqrt{\varphi^2+\theta^2},\frac{\pi}{2}\right)\right\},\\
            &D_2=\left\{(\cos\theta'\cos\varphi',\cos\theta'\sin\varphi',\sin\theta'):\varphi'\in\left[\frac{1}{2}\sqrt{\varphi^2+\theta^2},\pi\right),\,\theta'\in\left(0,\min\left\{\frac{\pi}{2},\frac{3}{2}\sqrt{\varphi^2+\theta^2}\right\}\right]\right\}.
        \end{split}
    \end{equation*}
    A direct computation then yields
    \begin{equation*}
        \begin{split}
            \int_{D_1}\frac{y_2y_3}{|\mathbf{y}-\mathbf{x}_0|^4}\omega(\mathbf{y})d\sigma(\mathbf{y})&\lesssim\|\tilde{\omega}\|_{L^\infty}\int^\frac{\pi}{2}_\frac{\sqrt{\varphi^2+\theta^2}}{2}\int^{\min\left\{\pi,\frac{3}{2}\sqrt{\varphi^2+\theta^2}\right\}}_0\frac{\varphi'\cos\theta'}{(\varphi'^{\mkern 2mu 2}+\theta'^{\mkern 2mu 2})^\frac{3}{2}}d\varphi'd\theta'\\
            &\lesssim\|\tilde{\omega}\|_{L^\infty},
        \end{split}
    \end{equation*}
    \begin{equation*}
        \begin{split}
            \int_{D_2}\frac{y_2y_3}{|\mathbf{y}-\mathbf{x}_0|^4}\omega(\mathbf{y})d\sigma(\mathbf{y})&\lesssim\|\tilde{\omega}\|_{L^\infty}\int^\pi_\frac{\sqrt{\varphi^2+\theta^2}}{2}\int^{\min\left\{\frac{\pi}{2},\frac{3}{2}\sqrt{\varphi^2+\theta^2}\right\}}_0\frac{\theta'\cos\theta'}{(\varphi'^{\mkern 2mu 2}+\theta'^{\mkern 2mu 2})^\frac{3}{2}}d\theta'd\varphi'\\
            &\lesssim\|\tilde{\omega}\|_{L^\infty}.
        \end{split}
    \end{equation*}
    Hence the replacement does not affect the leading-order terms.
\end{remark}

\subsection{Upper bound control and proof of Theorem \ref{Main2}}

In this subsection we complete the proof of Theorem~\ref{Main2}. We first estimate the speed at which two points on $\mathbb{S}^2$ approach each other. For a general vorticity $\tilde{\omega}\in W^{1,\infty}([-\pi,\pi)\times[-\pi/2,\pi/2])$ with $\|\omega\|_{L^\infty(\mathbb{S}^2)}=1$, we do not assume the odd-odd symmetry condition.

Let $\varepsilon\in(0,1]$ and consider two points on $\mathbb{S}^2$ whose geodesic distance is $2\varepsilon$. By rotational symmetry we may, without loss of generality, take these points to be $\mathbf{x}_{1,\varepsilon}=(\cos\varepsilon,-\sin\varepsilon,0)$ and $\mathbf{x}_{2,\varepsilon}=(\cos\varepsilon,\sin\varepsilon,0)$. Their relative approach speed on the sphere is then given by
\begin{equation}\label{approach v}
    \begin{split}
        \tilde{u}_\varphi(-\varepsilon,0)-\tilde{u}_\varphi(\varepsilon,0)&=\frac{1}{2\pi}\int_{\mathbb{S}^2}\frac{-y_3\omega(\mathbf{y})}{|\mathbf{x}_{1,\varepsilon}-\mathbf{y}|^2}d\sigma(\mathbf{y})-\frac{1}{2\pi}\int_{\mathbb{S}^2}\frac{-y_3\omega(\mathbf{y})}{|\mathbf{x}_{2,\varepsilon}-\mathbf{y}|^2}d\sigma(\mathbf{y})\\
        &=\frac{2\sin\varepsilon}{\pi}\int_{\mathbb{S}^2}\frac{y_2y_3\omega(\mathbf{y})}{|\mathbf{x}_{1,\varepsilon}-\mathbf{y}|^2|\mathbf{x}_{2,\varepsilon}-\mathbf{y}|^2}d\sigma(\mathbf{y}).
    \end{split}
\end{equation}

Assume that $\|\omega\|_{L^1(\mathbb{S}^2)}=I\in(0,4\pi]$. Then the right‑hand side of \eqref{approach v} attains its maximum when
\[
\omega(\mathbf{y})=\omega_\varepsilon(\mathbf{y}):=\operatorname{sgn}(y_2y_3)\,\mathcal{X}_{L_\varepsilon(a_{I,\varepsilon})}(\mathbf{y}),
\]
where $\operatorname{sgn}$ is the sign function, $\mathcal{X}_A$ is the characteristic function of a set $A$, and
\[
L_\varepsilon(a_{I,\varepsilon}):=\left\{\mathbf{y}\in\mathbb{S}^2:\frac{|y_2y_3|}{|\mathbf{x}_{1,\varepsilon}-\mathbf{y}|^2|\mathbf{x}_{2,\varepsilon}-\mathbf{y}|^2}\geq a_{I,\varepsilon}\right\},
\]
with $a_{I,\varepsilon}>0$ chosen so that $\|\mathcal{X}_{L_\varepsilon(a_{I,\varepsilon})}\|_{L^1(\mathbb{S}^2)}=I$. Denote by $u^*(\mathbf{x})=\tilde{u}^*_\varphi(\varphi,\theta)\mathbf{e}_\varphi+\tilde{u}^*_\theta(\varphi,\theta)\mathbf{e}_\theta$ the velocity at $\mathbf{x}\in\mathbb{S}^2$ associated with $\omega_\varepsilon$. Substituting $\omega_\varepsilon$ into \eqref{approach v} and noting that $\omega_\varepsilon$ is odd‑odd symmetric, we obtain
\begin{equation*}
    \begin{split}
        |\tilde{u}_\varphi(-\varepsilon,0)-\tilde{u}_\varphi(\varepsilon,0)|
        &\leq \tilde{u}^*_\varphi(-\varepsilon,0)-\tilde{u}^*_\varphi(\varepsilon,0)\\
        &=\frac{8\sin\varepsilon}{\pi}\int_{\mathbb{S}^2_{q}\cap L_\varepsilon(a_{I,\varepsilon})}\frac{y_2y_3}{|\mathbf{x}_{1,\varepsilon}-\mathbf{y}|^2|\mathbf{x}_{2,\varepsilon}-\mathbf{y}|^2}d\sigma(\mathbf{y}).
    \end{split}
\end{equation*}

By Lemma~\ref{Velocity estimate} and the subsequent remark, we have
\begin{equation*}
    \left|\tilde{u}^*_\varphi(-\varepsilon,0)-\tilde{u}^*_\varphi(\varepsilon,0)-\frac{8\sin\varepsilon}{\pi}\int_{\tilde{Q}(\varepsilon)\cap L_\varepsilon(a_{I,\varepsilon})}\frac{y_2y_3}{|\mathbf{y}-\mathbf{x}_0|^4}d\sigma(\mathbf{y})\right|\leq 2C_1\varepsilon,
\end{equation*}
and consequently
\begin{equation*}
    \tilde{u}^*_\varphi(-\varepsilon,0)-\tilde{u}^*_\varphi(\varepsilon,0)\leq\frac{8\sin\varepsilon}{\pi}\int_{\tilde{Q}(\varepsilon)\cap L_\varepsilon(a_{I,\varepsilon})}\frac{y_2y_3}{|\mathbf{y}-\mathbf{x}_0|^4}d\sigma(\mathbf{y})+2C_1\varepsilon.
\end{equation*}

For the integral on the right‑hand side of the above inequality we note that
\begin{equation*}
    \frac{y_2y_3}{|\mathbf{y}-\mathbf{x}_0|^4}\cos\theta'=\frac{\varphi'\theta'}{(\varphi'^{\mkern 2mu 2}+\theta'^{\mkern 2mu 2})^2}+O(1)\quad \text{for}\quad \mathbf{y}\in \mathbb{S}^2\setminus\{\mathbf{x}_0\}.
\end{equation*}
 Moreover, applying the polar coordinate transformation $(\varphi',\theta')=(r\cos\theta'',r\sin\theta'')$ yields
\begin{equation*}
\begin{split}
    \int_{\tilde{Q}(\varepsilon)\cap L_\varepsilon(a_{I,\varepsilon})}\frac{y_2y_3}{|\mathbf{y}-\mathbf{x}_0|^4}d\sigma(\mathbf{y})
    &\leq\int^{2\pi}_\varepsilon\int^\frac{\pi}{2}_0\frac{\cos\theta''\sin\theta''}{r}\,d\theta''dr+C\\
    &\leq-\frac{1}{2}\ln\varepsilon+C,
\end{split}    
\end{equation*}
which implies that there exists a constant $C_1'>0$ such that
\begin{equation}\label{v 1}
    |\tilde{u}_\varphi(-\varepsilon,0)-\tilde{u}_\varphi(\varepsilon,0)|
    \leq\tilde{u}^*_\varphi(-\varepsilon,0)-\tilde{u}^*_\varphi(\varepsilon,0)
    \leq\frac{2}{\pi}\left(\ln\frac{1}{\varepsilon}+C_1'\right)2\varepsilon.
\end{equation}

On the other hand, if $\|\nabla\tilde{\omega}\|_{L^\infty}$ is bounded, then we have
\begin{equation*}
    |\tilde{\omega}(\varphi',\theta')-\tilde{\omega}(0,\theta')|\leq \|\nabla\tilde{\omega}\|_{L^\infty}|\varphi'|.
\end{equation*}
Set $m:=\min\{1/\|\nabla\tilde{\omega}\|_{L^\infty},1\}$. Proceeding analogously to the computation leading to \eqref{Q1f1}, for $\mathbf{y}\in\mathbb{S}^2_{q}\cap\tilde{Q}(2\varepsilon)$ we obtain
\begin{equation*}
    \begin{split}
        \frac{y_2y_3}{|\mathbf{x}_{1,\varepsilon}-\mathbf{y}|^2|\mathbf{x}_{2,\varepsilon}-\mathbf{y}|^2}-\frac{y_2y_3}{|\mathbf{y}-\mathbf{x}_0|^4}
        =&O\left(\frac{|\mathbf{y}-\mathbf{x}_0|^4-|(\mathbf{x}_{1,\varepsilon}-\mathbf{x}_0)-(\mathbf{y}-\mathbf{x}_0)|^2|(\mathbf{x}_{2,\varepsilon}-\mathbf{x}_0)-({\mathbf{y}}-\mathbf{x}_0)|^2}{|\mathbf{y}-\mathbf{x}_0|^6}\right)\\
        =&O\left(\frac{|\mathbf{x}_{1,\varepsilon}-\mathbf{x}_0||\mathbf{y}-\mathbf{x}_0|^3}{|\mathbf{y}-\mathbf{x}_0|^6}\right)\\
        =&O\left(\frac{|\mathbf{x}_{1,\varepsilon}-\mathbf{x}_0|}{|\mathbf{y}-\mathbf{x}_0|^3}\right).
    \end{split}        
\end{equation*}
A direct computation then gives
\begin{equation*}
    \begin{split}
         \int_{\mathbb{S}^2_{q}\cap\tilde{Q}(2\varepsilon)}\frac{|\mathbf{x}_{1,\varepsilon}-\mathbf{x}_0|}{|\mathbf{y}-\mathbf{x}_0|^3}d\sigma(\mathbf{y})
         &\lesssim\sin\varepsilon\int^{\pi}_\varepsilon\frac{1}{r^2}\,dr\\
         &\lesssim1.
    \end{split}
\end{equation*}

For sufficiently small $\varepsilon$, we also have
\begin{equation*}
\begin{split}
     \int_{\mathbb{S}^2_{q}\setminus\tilde{Q}(2\varepsilon)}\frac{y_2y_3}{|\mathbf{x}_{1,\varepsilon}-\mathbf{y}|^2|\mathbf{x}_{2,\varepsilon}-\mathbf{y}|^2}\min\{\|\nabla\tilde{\omega}\|_{L^\infty}\,\varphi',1\}\,d\sigma(\mathbf{y})
     &\lesssim\int_{\mathbb{S}^2_{q}\setminus\tilde{Q}(2\varepsilon)}\frac{1}{|\mathbf{x}_{1,\varepsilon}-\mathbf{y}||\mathbf{x}_{2,\varepsilon}-\mathbf{y}|}d\sigma(\mathbf{y})\\
     &\lesssim\frac{1}{\sin\varepsilon}\int^{2\varepsilon}_01\,dr\\
     &\lesssim1.
\end{split}
\end{equation*}

Combining the estimates above, we obtain
\begin{equation*}
\begin{split}
     |\tilde{u}_\varphi(-\varepsilon,0)-\tilde{u}_\varphi(\varepsilon,0)|
     &\leq\frac{8\sin\varepsilon}{\pi}\int_{\mathbb{S}^2_{q}}\frac{y_2y_3}{|\mathbf{x}_{1,\varepsilon}-\mathbf{y}|^2|\mathbf{x}_{2,\varepsilon}-\mathbf{y}|^2}\min\{\|\nabla\tilde{\omega}\|_{L^\infty}\,\varphi',1\}\,d\sigma(\mathbf{y})\\
     &=\frac{8\sin\varepsilon}{\pi}\int_{\mathbb{S}^2_{q}\cap\tilde{Q}(2\varepsilon)}\frac{y_2y_3}{|\mathbf{x}_{1,\varepsilon}-\mathbf{y}|^2|\mathbf{x}_{2,\varepsilon}-\mathbf{y}|^2}\min\{\|\nabla\tilde{\omega}\|_{L^\infty}\,\varphi',1\}\,d\sigma(\mathbf{y})+O(\varepsilon)\\
     &\leq\frac{8\varepsilon}{\pi}\int_{\mathbb{S}^2_{q}\cap\tilde{Q}(2\varepsilon)}\frac{y_2y_3}{|\mathbf{y}-\mathbf{x}_0|^4}\min\{\|\nabla\tilde{\omega}\|_{L^\infty}\,\varphi',1\}\,d\sigma(\mathbf{y})+C\varepsilon.
\end{split}   
\end{equation*}
Applying polar coordinates once more and treating the cases $y\in\tilde{Q}(m)$ and $y\notin\tilde{Q}(m)$ separately, we get
\begin{equation}\label{v 2}
     \begin{split}
         |\tilde{u}_\varphi(-\varepsilon,0)-\tilde{u}_\varphi(\varepsilon,0)|
         &\leq\frac{8\varepsilon}{\pi}\int^1_m\int^\frac{\pi}{2}_0\frac{\cos\theta''\sin\theta''}{r}\,d\theta''dr
          +\frac{8\varepsilon\|\nabla\tilde{\omega}\|_{L^\infty}}{\pi}\int^m_{2\varepsilon}\int^\frac{\pi}{2}_01\,d\theta''dr+C\varepsilon\\
         &\leq\left(\frac{4}{\pi}\ln \frac{1}{m}+C\right)\varepsilon.
     \end{split}
 \end{equation}

Now combine \eqref{v 1} and \eqref{v 2} and denote $\delta=2\varepsilon$. Then there exists a constant $C_I>0$ such that for any two points on $\mathbb{S}^2$ whose geodesic distance is $\delta$, the speed at which they approach each other along $\mathbb{S}^2$ is bounded by
\begin{equation}\label{v 3}
     \frac{2}{\pi}\left(\ln\min\left\{\frac{2}{\delta},\max\{\|\nabla\tilde{\omega}\|_{L^\infty},1\}\right\}+C_I\right)\delta.
\end{equation}

Consequently, for any $\tilde{\omega}(\cdot,0)\in W^{1,\infty}([-\pi,\pi)\times[-\pi/2,\pi/2])$ with $\|{\omega}(\cdot,0)\|_{L^\infty(\mathbb{S}^2)}=1$ and $\|{\omega}(\cdot,0)\|_{L^1(\mathbb{S}^2)}=I\in(0,4\pi]$, the following estimate holds:
\begin{equation}
     \|\nabla\tilde{\omega}(\cdot,t)\|_{L^\infty}\leq\exp\left(e^{2t/\pi+C_I(1-e^{-2t/\pi})}\max\{\ln\|\nabla\tilde{\omega}(\cdot,0)\|_{L^\infty},1\}\right)\quad\text{for any}\quad t\geq0,
\end{equation}
which implies \eqref{main2}.

\section{Proof of Theorem \ref{Main1}}
This section is devoted to the proof of Theorem~\ref{Main1}. The main goal is to construct a suitable initial vorticity function and to find an appropriate domain in the quarter-sphere such that the vorticity at each point in this domain has a positive lower bound (or simply equals $1$), and the distance from this domain to $\{\varphi=0\}$ shrinks at a double‑exponential rate. Moreover, to achieve the maximal growth rate $\pi/2$, the domain needs to approximately fill the shrinking neighborhood of $\mathbf{x}_0$ on $\mathbb{S}^2_{q}$.

\begin{figure}[htbp]
    \centering
    \includegraphics[width=0.5\textwidth]{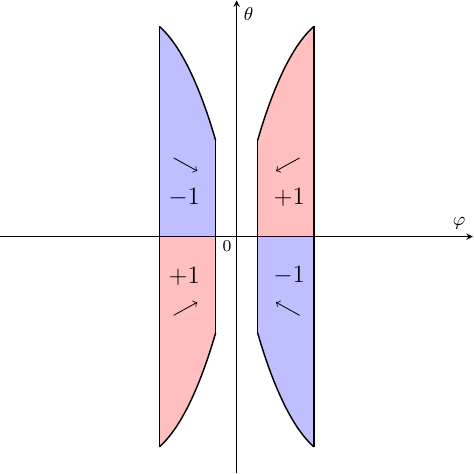}
    \caption{Four domains in the $(\varphi,\theta)$-plane whose distances to the $\theta$-axis (where $\tilde{\omega}=0$ by odd symmetry) shrink rapidly; inside each domain $|\tilde{\omega}|=1$.}
    \label{fig:2}
\end{figure}

Let $g(s):=s\ln(e^e-1+|\ln s|)$ for $s\geq0$, and for $\varepsilon\in[0,1/e)$, define
\begin{equation*}
    \Omega_\varepsilon:=\{(\varphi',\theta')\in(0,1)^2:\varphi'\in(\varepsilon,e^{-1}),\ \theta'<g(\varphi')\}.
\end{equation*}
Denote
\begin{equation*}
    D_s:=\left\{(\varphi',\theta')\in\Omega_0:s^2+g^2(s)<\varphi'^{\mkern 2mu 2}+\theta'^{\mkern 2mu 2}<{e^{-2}}\right\}
\end{equation*}
and $\hat{D}_s:=\{(\cos\theta'\cos\varphi',\cos\theta'\sin\varphi',\sin\theta'):(\varphi',\theta')\in D_s\}$.
Expanding $y_2y_3\cos\theta'$ and $|\mathbf{y}-\mathbf{x}_0|^4$ then yields
\begin{equation}
\begin{split}
    \frac{4}{\pi}\int_{\hat{D}_s}\frac{y_2y_3}{|\mathbf{y}-\mathbf{x}_0|^4}d\sigma(\mathbf{y})&=\frac{4}{\pi}\int_{D_s}\left(\frac{\varphi'\theta'}{(\varphi'^{\mkern 2mu 2}+\theta'^{\mkern 2mu 2})^2}+O(1)\right)d\varphi'd\theta'\\
    &=h(s)|\ln s|+O(1),
\end{split}    
\end{equation}
where $h:(0,1/e^4]\to(0,\infty)$ satisfies
\begin{equation}
    \lim_{s\to0}h(s)=\frac{2}{\pi}.
\end{equation}
This limit follows from a direct computation giving
\begin{equation*}
    \lim_{s\to0}\frac{|\ln g(s)|}{|\ln s|}=\lim_{s\to0}\frac{-\ln s-\ln\ln(e^e-1-\ln s)}{-\ln s}=1,
\end{equation*}
$\lim_{s\to0}\frac{g(s)}{s}=\infty$, and 
\begin{equation*}
    \frac{4}{\pi}\int^{\frac{\pi}{2}}_0\cos\theta''\sin\theta''\,d\theta''=\frac{2}{\pi},
\end{equation*}
which is similar to the calculation in \cite{ZA2}.
Then there exists a constant $C_r>0$ such that
\begin{equation*}
    h(s)|\ln s|-C_r\leq\frac{4}{\pi}\int_{\hat{D}_s}\frac{y_2y_3}{|\mathbf{y}-\mathbf{x}_0|^4}d\sigma(\mathbf{y})\leq h(s)|\ln s|+C_r.
\end{equation*}
Denote 
\begin{equation}\label{f}
    f(s):=2+\ln\ln(e^e-1+|\ln s|)\geq1+\ln\big(1+\ln(e^e-1+|\ln s|)\big)=1+\ln\frac{s+g(s)}{s},\qquad \forall s\in \left(0, e^{-1}\right),
\end{equation}
and let $s_0\in(0,1/e^4]$ be such that
\begin{equation}\label{h,f}
    h(s)|\ln s|\geq\frac{C_1}{\cos1\,\cos e^{-1}}\,f(s)+C_r,\qquad \forall s\in (0, s_0].
\end{equation}
Note that for $s\in(0,e^{-1}]$, we have
\begin{equation}\label{g'(s)}
    g'(s)=\ln(e^e-1-\ln s)-\frac{1}{e^e-1-\ln s}\in\left(1,\frac{g(s)}{s}\right),
\end{equation}
and $g(e^{-1})=1$.
Then define
\begin{equation*}
    \begin{split}
        C':=&\frac{g(s_0)C_1}{s_0\cos1}\left(1+\ln\frac{s_0+g(s_0)}{s_0}\right)=\max_{s\in\left[s_0,{e}^{-1}\right)}\frac{g(s)C_1}{s\cos1}\left(1+\ln\frac{s+g(s)}{s}\right),\\
        &\rho_0:=\frac{s_0}{(e^e-1-\ln s_0)\ln(e^e-1-\ln s_0)-1}=\min_{s\in\left[s_0,{e}^{-1}\right)}\left(\frac{g(s)}{g'(s)}-s\right).
    \end{split}
\end{equation*}

Let $\varepsilon\in(0,s_0]$. For any $\tilde{\omega}\in L^\infty([-\pi,\pi)\times[-\pi/2,\pi/2])$ that is odd‑odd symmetric and satisfies
\begin{equation*}
    \mathcal{X}_{\Omega_\varepsilon}\leq\tilde{\omega}\,\mathcal{X}_{\left([0,\pi)\times\left[0,\frac{\pi}{2}\right]\right)}\leq\mathcal{X}_{\left([0,\pi)\times\left[0,\frac{\pi}{2}\right]\right)},
\end{equation*}
by Lemma~\ref{Velocity estimate} and the subsequent remark, together with \eqref{f} and \eqref{h,f}, it follows that for any $s\in[\varepsilon,s_0]$,
\begin{equation*}
    \begin{split}
        \frac{4}{\pi}\int_{\tilde{Q}\big(\sqrt{s^2+g^2(s)}\,\big)}\frac{y_2y_3}{|\mathbf{y}-\mathbf{x}_0|^4}\omega(\mathbf{y})d\sigma(\mathbf{y})&\geq\frac{4}{\pi}\int_{\hat{D}_s}\frac{y_2y_3}{|\mathbf{y}-\mathbf{x}_0|^4}d\sigma(\mathbf{y})\\
        &\geq \frac{C_1}{\cos 1\,\cos e^{-1}}\,f(s)\\
        &\geq\max\left\{|B_\varphi(s,g(s))|,\ \frac{|B_\theta(s,g(s))|}{\cos g(s)\,\cos s}\right\}.
    \end{split}
\end{equation*}
Thus we obtain
\begin{equation*}
    \tilde{u}_\varphi(s,g(s))\leq0,\quad \tilde{u}_\theta(s,g(s))\geq0\quad\text{for any}\quad s\in[\varepsilon,s_0].
\end{equation*}
For $s\in[s_0,e^{-1})$, notice that
\begin{equation*}
   \begin{split}
       \tilde{u}_\varphi(s,g(s))&\leq|B_\varphi(s,g(s))|\sin s\leq C_1\left(1+\ln\frac{s+g(s)}{s}\right)s,
   \end{split}
\end{equation*}
and
\begin{equation*}
    \begin{split}
        \tilde{u}_\theta(s,g(s))&\geq-\frac{\sin g(s)}{\cos g(s)}|B_\theta(s,g(s))|\\        
        &\geq-\frac{g(s)C_1}{s\cos1}\left(1+\ln\frac{s+g(s)}{g(s)}\right)s.
    \end{split}
\end{equation*}
Then, from the definition of $C'$, we have
\begin{equation*}
    \tilde{u}_\varphi(s,g(s))<C's,\quad \tilde{u}_\theta(s,g(s))>-C's\quad\text{for any}\quad s\in\left[s_0,e^{-1}\right).
\end{equation*}
Furthermore, for any $s_1\in[0,g(\varepsilon)]$ and $s_2\in[0,1]$, using the definitions of $\hat{D}_\varepsilon$ and $f(\varepsilon)$, together with $1+\ln(1+es_2)<3$ for any $s_2\in[0,1]$, we obtain
\begin{equation*}
    \begin{split}
        \frac{1}{\sin\varepsilon}\tilde{u}_\varphi(\varepsilon,s_1)-\frac{1}{\sin e^{-1}}\tilde{u}_\varphi(e^{-1},s_2)\leq&-\frac{4}{\pi}\int_{\tilde{Q}\big(\sqrt{\varepsilon^2+s_1^2}\,\big) }\frac{y_2y_3}{|\mathbf{y}-\mathbf{x}_0|^4}\omega(\mathbf{y})d\sigma(\mathbf{y})+|B_\varphi(\varepsilon,s_1)|\\
        &+\frac{4}{\pi}\int_{\tilde{Q}\big(\sqrt{e^{-2}+s_2^2}\,\big) }\frac{y_2y_3}{|\mathbf{y}-\mathbf{x}_0|^4}\omega(\mathbf{y})d\sigma(\mathbf{y})+|B_\varphi(e^{-1},s_2)|\\
        \leq&-\frac{4}{\pi}\int_{\hat{D}_\varepsilon}\frac{y_2y_3}{|\mathbf{y}-\mathbf{x}_0|^4}d\sigma(\mathbf{y})+C_1f(\varepsilon)+3C_1.
    \end{split}
\end{equation*}
This implies that there exists some $C_2>3C_1+C_r$ such that
\begin{equation*}
    \frac{1}{\sin\varepsilon}\sup_{s\in[0,g(\varepsilon)]}\tilde{u}_\varphi(\varepsilon,s)\leq-h(\varepsilon)|\ln\varepsilon|+C_1f(\varepsilon)+C_2+\frac{1}{\sin e^{-1}}\inf_{s\in[0,1]}\tilde{u}_\varphi(e^{-1},s).
\end{equation*}
For any $\alpha\in(0,1]$, replace $D_s$ by $\alpha D_s$ and repeat the above discussion. Note that
\begin{equation*}
    \int_{\alpha D_s}\frac{\varphi'\theta'}{(\varphi'^{\mkern 2mu 2}+\theta'^{\mkern 2mu 2})^2}d\varphi'd\theta'=\int_{ D_s}\frac{\varphi'\theta'}{(\varphi'^{\mkern 2mu 2}+\theta'^{\mkern 2mu 2})^2}d\varphi'd\theta'.
\end{equation*}
Moreover, since the integral over $\alpha D_s$ of the remainder term in the expansion of $\frac{y_2y_3\cos\theta'}{|\mathbf{y}-\mathbf{x}|^4}$ into $\frac{\varphi'\theta'}{(\varphi'^{\mkern 2mu 2}+\theta'^{\mkern 2mu 2})^2}$ can be controlled by a constant independent of $\alpha\in(0,1]$ and $s\in(0,1/e^4]$, we obtain the following lemma.

\begin{lemma}\label{Precise velocity}
    For $\varepsilon\in(0,s_0]$ and $\alpha\in(0,1]$, if $\tilde{\omega}\in L^\infty([-\pi,\pi)\times[-\pi/2,\pi/2])$ is odd‑odd symmetric and satisfies
    \begin{equation*}
        \mathcal{X}_{\alpha\Omega_\varepsilon}\leq\tilde{\omega}\,\mathcal{X}_{\left([0,\pi)\times\left[0,\frac{\pi}{2}\right]\right)}\leq\mathcal{X}_{\left([0,\pi)\times\left[0,\frac{\pi}{2}\right]\right)},
    \end{equation*}
    then we have
    \begin{equation}\label{precise v1}
        \tilde{u}_\varphi(\alpha s,\alpha g(s))\leq0,\quad \tilde{u}_\theta(\alpha s,\alpha g(s))\geq0\quad\text{for any}\quad s\in[\varepsilon,s_0],
    \end{equation}
    \begin{equation}\label{precise v2}
        \tilde{u}_\varphi(\alpha s,\alpha g(s))<C'\alpha s,\quad \tilde{u}_\theta(\alpha s,\alpha g(s))>-C'\alpha s\quad\text{for any}\quad s\in\left[s_0,e^{-1}\right),
    \end{equation}
    and
    \begin{equation}\label{precise v3}
        \frac{1}{\sin(\alpha\varepsilon)}\sup_{s\in[0,\alpha g(\varepsilon)]}\tilde{u}_\varphi(\alpha\varepsilon,s)\leq-h(\varepsilon)|\ln\varepsilon|+C_1f(\varepsilon)+C_2+\frac{1}{\sin(\alpha e^{-1})}\inf_{s\in[0,\alpha]}\tilde{u}_\varphi(\alpha e^{-1},s).
    \end{equation}
\end{lemma}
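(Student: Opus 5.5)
The plan is to rerun the $\alpha=1$ argument given just before the statement, with every set dilated by $\alpha$, and to check that no constant depends on $\alpha$. The inputs are Lemma~\ref{Velocity estimate}, in the form of the subsequent remark with $\tilde{Q}(\sqrt{\varphi^2+\theta^2})$ replacing $Q(3\varphi/2,3\theta/2)$, together with the hypothesis $\mathcal{X}_{\alpha\Omega_\varepsilon}\le\tilde\omega\le 1$ on the quarter-sphere.

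\textbf{Step 1: the dilated lower bound.} I would first show that for $s\in[\varepsilon,s_0]$,
\begin{equation*}
\frac{4}{\pi}\int_{\widehat{\alpha D_s}}\frac{y_2y_3}{|\mathbf{y}-\mathbf{x}_0|^4}\,d\sigma(\mathbf{y})\ \ge\ h(s)|\ln s|-C_r,
\end{equation*}
with the same $C_r$ as for $\alpha=1$. Write $y_2y_3\cos\theta'\,|\mathbf{y}-\mathbf{x}_0|^{-4}=\varphi'\theta'(\varphi'^2+\theta'^2)^{-2}+R(\varphi',\theta')$ with $|R|\lesssim1$ on $[0,1]^2$.
\begin{itemize}
\item The main part is invariant under $(\varphi',\theta')\mapsto\alpha(\varphi',\theta')$, since it is homogeneous of degree $-2$.
\item The remainder contributes at most $C|\alpha D_s|\le C|D_s|\le C$.
\end{itemize}
So the same $h$ and $C_r$ work uniformly in $\alpha$.

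Since $\alpha\Omega_\varepsilon\supset\alpha D_s$ and $\omega\ge0$ on the quarter-sphere, the integral over $\tilde Q(\alpha\sqrt{s^2+g^2(s)})$ dominates the integral over $\widehat{\alpha D_s}$, up to a remark-type error bounded by a constant.

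\textbf{Step 2: the error terms at $(\alpha s,\alpha g(s))$.} The bounds on $B_\varphi$ and $B_\theta$ from Lemma~\ref{Velocity estimate} use the logarithmic alternative $\ln(1+\theta/\varphi)$, which is scale invariant. This gives
\begin{equation*}
|B_\varphi|,\ |B_\theta|\le C_1\Bigl(1+\ln\tfrac{s+g(s)}{s}\Bigr)\le C_1f(s).
\end{equation*}
The prefactors $\cos$ and $\sin$ are controlled because $\alpha s\le e^{-1}$ and $\alpha g(s)\le1$, which is where the factor $\cos1\,\cos e^{-1}$ enters. Then \eqref{h,f} gives the sign conditions \eqref{precise v1}.

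For \eqref{precise v2} I would bound $\tilde u_\varphi\le|B_\varphi|\sin(\alpha s)\le C_1(1+\ln\frac{s+g(s)}{s})\alpha s$. Similarly, $\tilde u_\theta\ge-\tan(\alpha g(s))|B_\theta|$, and $\tan(\alpha g(s))\le \alpha g(s)/\cos1$. The definition of $C'$ as a maximum over $[s_0,e^{-1})$ then yields the claim. The strictness should come from the maximum with the factor~$1$ slack, or from enlarging $C'$ slightly.

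\textbf{Step 3: the difference estimate \eqref{precise v3}.} I would use \eqref{velocity estimate1} at the two points $(\alpha\varepsilon,s_1)$ with $s_1\le\alpha g(\varepsilon)$, and $(\alpha e^{-1},s_2)$ with $s_2\le\alpha$, dividing each by $x_2/\sqrt{1-x_3^2}=\sin\varphi$. The difference of the two main integrals is minus the integral over the region between the radii $\alpha\sqrt{\varepsilon^2+s_1^2}$ and $\alpha\sqrt{e^{-2}+s_2^2}$. Because $\omega\ge0$ on the quarter-sphere, this difference is bounded above by minus the integral over $\widehat{\alpha D_\varepsilon}$, up to $O(1)$ from the remark's region swap. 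I would check the set containments here, handling radii in $[\alpha e^{-1},\alpha\sqrt2 e^{-1}]$ by the same $O(1)$ bound. Step 1 then bounds this by $-h(\varepsilon)|\ln\varepsilon|+C_r$. The error terms are bounded by $C_1f(\varepsilon)$ at the first point and by $C_1(1+\ln(1+e))\le3C_1$ at the second. Taking the supremum over $s_1$ and the infimum over $s_2$ gives \eqref{precise v3} with $C_2>3C_1+C_r$.

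\textbf{Main obstacle.} The delicate point is uniformity in $\alpha$, especially for the remainders and the remark's region-swap constants. These must not pick up factors like $\ln\alpha$. I expect to verify this by noting that every non-main integrand is $O(r^{-1})$ or bounded on regions whose radial extent is proportional to $\alpha$. Integrating $O(r^{-1})$ over $r\in[a\alpha,b\alpha]$ gives $\ln(b/a)$, which is independent of $\alpha$.
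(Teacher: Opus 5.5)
Your proposal is correct and follows the paper's own argument. The paper likewise carries out the case $\alpha=1$ explicitly and then obtains the lemma by replacing $D_s$ with $\alpha D_s$, relying on the same two observations as you: $\int_{\alpha D_s}\varphi'\theta'(\varphi'^2+\theta'^2)^{-2}\,d\varphi'\,d\theta'$ does not depend on $\alpha$, and the bounded remainder contributes a constant uniform in $\alpha\in(0,1]$ and $s$; the scale invariance of the logarithmic alternative in the bounds for $B_\varphi,B_\theta$, which you state explicitly, is what the paper uses implicitly when it says to repeat the discussion.
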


Now consider $\varepsilon(t):[0,\infty)\to(0,s_0]$, which is decreasing and yet to be determined. Let $\tilde{\omega}(\cdot,0)\in C^\infty([-\pi,\pi)\times[-\pi/2,\pi/2])$ be an odd‑odd symmetric function satisfying
\begin{equation*}
    \mathcal{X}_{\Omega_{\varepsilon(0)}}\leq\tilde{\omega}(\cdot,0)\,\mathcal{X}_{\left([0,\pi)\times\left[0,\frac{\pi}{2}\right]\right)}\leq\mathcal{X}_{\left([0,\pi)\times\left[0,\frac{\pi}{2}\right]\right)}.
\end{equation*}
Let $\alpha(t):[0,\infty)\to(0,1]$ satisfy $\alpha(0)=1$ and
\begin{equation}\label{alpha}	 	
    \alpha'(t)=-\left(\frac{3C'}{\rho_0\cos1}+C_3\right)\alpha(t)+\frac{\alpha(t)}{\sin(\alpha(t)e^{-1})}\inf_{s\in[0,\alpha(t)]}\frac{\tilde{u}_\varphi(\alpha(t)e^{-1},s,t)}{\cos s}\quad\text{for}\quad t\in[0,\infty),
\end{equation}
where $C_3=\frac{2C_2}{\cos1}$. From $C_2>3C_1>\frac{1}{\sin(\alpha e^{-1})}\inf_{s\in[0,\alpha]}\tilde{u}_\varphi(\alpha e^{-1},s,t)$ for any $\alpha\in(0,1]$, we obtain
\begin{equation}\label{alpha'}
    \alpha'(t)\leq-\frac{3C'}{\rho_0\cos1}\,\alpha(t)\quad\text{for any}\quad t\in[0,\infty).
\end{equation}
Moreover, from \eqref{v 1}, it follows that
\begin{equation*}
    |\tilde{u}_\varphi(\alpha(t)e^{-1},s,t)|\leq\frac{2\bigl(|\ln(2\alpha(t)e^{-1})|+C_I\bigr)}{\pi e}\,\alpha(t),
\end{equation*}
where $I=\|\omega(\cdot,0)\|_{L^1(\mathbb{S}^2)}\in(0,4\pi]$, and $C_I$ coincides with that in \eqref{v 3}. Hence $\alpha(t)$ remains positive.

We now state the following claim.
\begin{claim}\label{claim:shrink}
    We can choose a suitable $\varepsilon(t):[0,\infty)\to(0,s_0]$ satisfying
\begin{equation}\label{2/pi}
    \lim_{t\to\infty}\frac{\ln(-\ln\varepsilon(t))}{t}=\frac{2}{\pi},
\end{equation}
such that the points on the boundary of $\{(\varphi',\theta'):\tilde{\omega}(\varphi',\theta',t)=1\}$ never enter $\alpha(t)\Omega_{\varepsilon(t)}$.
\end{claim}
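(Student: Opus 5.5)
We will define $\varepsilon(t)$ through an ODE that uses the negative drift from \eqref{precise v3}, and then run a barrier argument on $\partial(\alpha(t)\Omega_{\varepsilon(t)})$. Concretely, let $\varepsilon(0)\le s_0$ and
\begin{equation*}
\varepsilon'(t)=-\varepsilon(t)\cos 1\,\Bigl(h(\varepsilon(t))|\ln\varepsilon(t)|-C_1f(\varepsilon(t))-2C_2\Bigr).
\end{equation*}
The right-hand side is negative because of \eqref{h,f}. Since $h(\varepsilon)\to 2/\pi$ and $f(\varepsilon)=O(\ln\ln|\ln\varepsilon|)=o(|\ln\varepsilon|)$, the function $L(t)=|\ln\varepsilon(t)|$ satisfies $L'=(2/\pi+o(1))L$ up to the factor coming from $\cos$. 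To get the sharp constant $2/\pi$, this factor has to be handled carefully. Since $\sin(\alpha\varepsilon)/(\alpha\varepsilon)\to1$ and $\cos$ of the small angle tends to $1$, I would state the ODE with the exact factor $\sin(\alpha\varepsilon)/(\alpha\cos(\cdot))$ and absorb the errors into $C_2$. The result is $\ln L(t)/t\to 2/\pi$, which is \eqref{2/pi}. Because $L$ grows only double-exponentially, $\varepsilon(t)$ stays positive for all time.

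The barrier argument goes as follows. Suppose some point of the boundary of $\{\tilde\omega(\cdot,t)=1\}$ first reaches $\partial(\alpha(t)\Omega_{\varepsilon(t)})$ at time $t_*$. Up to $t_*$ the hypothesis of Lemma~\ref{Precise velocity} holds with $\alpha=\alpha(t)$ and $\varepsilon=\varepsilon(t)$. We then check each boundary piece of the moving domain and show that the flow strictly outpaces the motion of the boundary.

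\textbf{Left edge} $\{\varphi'=\alpha\varepsilon,\ 0\le\theta'\le\alpha g(\varepsilon)\}$. The angular velocity of a particle is $\tilde u_\varphi/\cos\theta'$. The edge moves at rate $(\alpha\varepsilon)'=\alpha'\varepsilon+\alpha\varepsilon'$. By \eqref{precise v3} the particle velocity is at most $\sin(\alpha\varepsilon)\bigl[-h|\ln\varepsilon|+C_1f+C_2+\frac{1}{\sin(\alpha e^{-1})}\inf\tilde u_\varphi(\alpha e^{-1},\cdot)\bigr]$. The $\inf$ term is matched by the corresponding term in \eqref{alpha}, and the remaining negative part is matched by the choice of $\varepsilon'$. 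Together these give strict domination.

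\textbf{Right edge} $\varphi'=\alpha e^{-1}$. The constant $3C'/(\rho_0\cos1)$ in \eqref{alpha} makes the domain shrink inward faster than any outward flow.

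\textbf{Curved edge} $\theta'=\alpha g(\varphi'/\alpha)$. For $s\in[\varepsilon,s_0]$, \eqref{precise v1} gives that the velocity points inward or tangentially: it moves toward smaller $\varphi$ and larger $\theta$, and the curve lies above, with slope $g'>0$. The inward normal motion coming from $\alpha'<0$, namely the scaling toward the origin, then gives strictness. For $s\in[s_0,e^{-1})$ we use \eqref{precise v2}. The outward velocity is at most $C'\alpha s$. Scaling moves the curve inward along rays at rate $|\alpha'|/\alpha$ times the position. The normal component of this motion is comparable to $|\alpha'|(g/g'-s)/\sqrt{1+g'^2}\ge |\alpha'|\rho_0/\cdots$, by the definition of $\rho_0$. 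Hence \eqref{alpha'} dominates $C'$, with the constant $3$ providing the slack.

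\textbf{Bottom edge} $\theta'=0$. The boundary is invariant there, since $\tilde u_\theta=0$ by odd symmetry.

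These cases lead to a contradiction at $t_*$. Corner points are handled by the same one-sided inequalities.

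The main obstacle is the left-edge comparison. There the precise coefficient $2/\pi$ must survive all the error terms: the terms $\sin(\alpha\varepsilon)$ versus $\alpha\varepsilon$, $\cos\theta'$, $\alpha'$, and $f$. We also need the factor $\alpha(t)$, which may itself decay exponentially, not to spoil the double-exponential rate. This works because $\ln|\ln(\alpha\varepsilon)|\sim\ln|\ln\varepsilon|$ whenever $|\ln\alpha|=O(t)$, which follows from \eqref{alpha} and the bound on $\tilde u_\varphi$ derived from \eqref{v 1}.
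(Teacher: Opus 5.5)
Your proposal follows the paper's proof. The left-edge comparison with \eqref{precise v3}, with the $\inf$ term absorbed by $\alpha'\varepsilon$ through \eqref{alpha}, gives the ODE \eqref{varepsilon}. Its main term carries the factor $\frac{\sin\varepsilon}{\varepsilon}\to1$; as you noticed, a factor $\cos 1$ there would only give the rate $\frac{2}{\pi}\cos 1$. The other edges are handled by \eqref{precise v1}, \eqref{precise v2}, $\rho_0$ and \eqref{alpha}, and $k=\ln(-\ln\varepsilon)$ satisfies $k'\to 2/\pi$.

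Several of your local justifications need correcting.

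\textbf{Right edge.} It is the $\inf$ term in \eqref{alpha}, not the constant $3C'/(\rho_0\cos 1)$, that keeps the edge moving left at least as fast as every fluid particle on it. The leftward fluid speed there can be of order $\alpha|\ln\alpha|$, which no fixed multiple of $\alpha$ dominates. The constant only provides strictness, the slack used on the curved edge for $s\in[s_0,e^{-1})$, and the slack for the left-edge error terms.

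\textbf{Curved edge.} For $s\in[\varepsilon,s_0]$, \eqref{precise v1} makes the fluid cross the curve \emph{outward} (up and to the left), not inward. Strictness comes from this together with the inward motion of the curve.

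\textbf{Sign of $\varepsilon'$.} Negativity of your $\varepsilon'$ does not follow from \eqref{h,f} alone, since that inequality does not control the extra $2C_2$ for all $s\le s_0$. As in the paper, you must take $\varepsilon(0)$ small.

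\textbf{The factor $\alpha$.} Your claim $|\ln\alpha|=O(t)$ does not follow from \eqref{alpha} and \eqref{v 1}. These only give $\frac{d}{dt}|\ln\alpha|\lesssim1+|\ln\alpha|$, i.e.\ $|\ln\alpha|\lesssim e^{Ct}$. The claim is also not needed. Using $\sin(\alpha\varepsilon)\ge\alpha\sin\varepsilon$ on the negative main term and $\sin(\alpha\varepsilon)\le\alpha\varepsilon$ on the error terms, $\alpha$ cancels from the left-edge inequality. Hence $\varepsilon(t)$ can be chosen independently of $\alpha$, and $\alpha\le1$ only helps in $\|\nabla\tilde\omega\|_{L^\infty}\ge1/(\alpha\varepsilon)$.
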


Assuming this claim, \eqref{main1} follows immediately. Indeed, for $s\in(0,\alpha(t)g(\varepsilon(t))]$, since $\tilde{\omega}(t,0,s)=0$ by odd symmetry and $\tilde{\omega}(t,\alpha(t)\varepsilon(t),s)=1$, we have
\begin{equation*}
    \|\nabla\tilde{\omega}(\cdot,t)\|_{L^\infty}\geq\frac{1}{\alpha(t)\varepsilon(t)}\geq\frac{1}{\varepsilon(t)}.
\end{equation*}
Combined with \eqref{2/pi}, we finally arrive at
\begin{equation*}
    \liminf_{t\to\infty}\frac{\ln\ln\|\nabla\tilde{\omega}(\cdot,t)\|_{L^\infty\left([-\pi,\pi)\times(0,\pi/2]\right)}}{t}\geq\frac{2}{\pi},
\end{equation*}
which together with \eqref{main2} completes the proof.

\bigskip

It remains to prove the claim.

\textbf{Proof of the claim.}
First, from $\|\omega(\cdot,t)\|_{L^\infty(\mathbb{S}^2)}=1$, for any $\mathbf{x}\in\mathbb{S}^2$ we have
\begin{equation}\label{v bound}
\begin{split}
    |u(\mathbf{x},t)|&\leq\left|\frac{1}{2\pi}\int_{\mathbb{S}^2}\frac{\mathbf{x}\wedge\mathbf{y}}{|\mathbf{x}-\mathbf{y}|^2}\,\omega(\mathbf{y},t)\,d\sigma(\mathbf{y})\right|\\
    &\leq\frac{1}{2\pi}\int_{\mathbb{S}^2}\frac{|\mathbf{x}-\mathbf{y}|}{|\mathbf{x}-\mathbf{y}|^2}\,d\sigma(\mathbf{y})\\
    &\lesssim\frac{1}{2\pi}\int^{2\pi}_0\int^{2\pi}_0\frac{1}{r}\,dr\,d\theta\\
    &\lesssim1,
\end{split}         
\end{equation}
where in the first inequality we have used the inequality $|\mathbf{x}\wedge\mathbf{y}|\leq |\mathbf{x}-\mathbf{y}|$ from  \cite[Lemma A.1]{DR}.  

On the other hand, for any $(\varphi(t),\theta(t))\in(0,\pi)\times(0,\pi/2)$, denote
\begin{equation*}
    x(t)=(\cos\theta(t)\cos\varphi(t),\ \cos\theta(t)\sin\varphi(t),\ \sin\theta(t)),
\end{equation*}
and
\begin{equation*}
\begin{split}
    &\mathbf{e}_\varphi(\varphi(t),\theta(t))=(-\sin\varphi(t),\cos\varphi(t),0),\\
    &\mathbf{e}_\theta(\varphi(t),\theta(t))=(-\sin\theta(t)\cos\varphi(t),-\sin\theta(t)\sin\varphi(t),\cos\theta(t)).
\end{split}        
\end{equation*}
A direct computation yields
\begin{equation}\label{phi t}
    \begin{split}
        \tilde{u}_\varphi(\varphi(t),\theta(t))&=\dot{x}(t)\cdot \mathbf{e}_\varphi(\varphi(t),\theta(t))\\
        &=\left(\frac{d}{dt}\bigl(\cos\theta(t)\cos\varphi(t),\ \cos\theta(t)\sin\varphi(t),\ \sin\theta(t)\bigr)\right)\cdot \mathbf{e}_\varphi(\varphi(t),\theta(t))\\
        &=\dot{\varphi}(t)\cos\theta(t),
    \end{split}
\end{equation}
\begin{equation}\label{theta t}
    \begin{split}
        \tilde{u}_\theta(\varphi(t),\theta(t))&=\dot{x}(t)\cdot \mathbf{e}_\theta(\varphi(t),\theta(t))\\
        &=\left(\frac{d}{dt}\bigl(\cos\theta(t)\cos\varphi(t),\ \cos\theta(t)\sin\varphi(t),\ \sin\theta(t)\bigr)\right)\cdot \mathbf{e}_\theta(\varphi(t),\theta(t))\\
        &=\dot{\theta}(t).
    \end{split}
\end{equation}
Denote by $\mathbf{e}$ the unit outward normal vector on the boundary of $\alpha(t)\Omega_{\varepsilon(t)}$, and let $\mathbf{e}_1=(1,0)$, $\mathbf{e}_2=(0,1)$. From \eqref{phi t} and \eqref{theta t} we further define
\begin{equation*}
    \bar{u}(\varepsilon',\theta',t):=\frac{\tilde{u}_\varphi(\varphi',\theta',t)}{\cos\theta'}\,\mathbf{e}_1+\tilde{u}_\theta(\varphi',\theta',t)\,\mathbf{e}_2.
\end{equation*}
On $\{(\alpha(t)e^{-1},s):s\in[0,\alpha(t)]\}$, we have $\mathbf{e}=\mathbf{e}_1$. Using \eqref{alpha} and considering separately the cases
\[
\inf_{s\in[0,\alpha(t)]}\tilde{u}_\varphi(\alpha(t)e^{-1},s,t)\geq0\quad\text{and}\quad\inf_{s\in[0,\alpha(t)]}\tilde{u}_\varphi(\alpha(t)e^{-1},s,t)<0,
\]
we obtain
\begin{equation*}
    \begin{split}
        \bar{u}(\alpha(t)e^{-1},s,t)\cdot\mathbf{e}&=\frac{\tilde{u}_\varphi(\alpha(t)e^{-1},s,t)}{\cos s}\\
        &>e\alpha'(t).
    \end{split}
\end{equation*}
On $\{(\alpha(t)s,\alpha(t)g(s)):s\in[\varepsilon(t),1/e]\}$, we have
\begin{equation*}
    \mathbf{e}=-v_s\,\mathbf{e}_1+\frac{v_s}{g'(s)}\,\mathbf{e}_2,
\end{equation*}
where $v_s=(1+g'(s)^{-2})^{-1/2}$. By Lemma~\ref{Precise velocity}, we obtain
\begin{equation*}
    \bar{u}(\alpha(t)s,\alpha(t)g(s),t)\cdot\mathbf{e}\geq0\quad\text{if } s\in[\varepsilon(t),s_0],
\end{equation*}
\begin{equation*}
    \bar{u}(\alpha(t)s,\alpha(t)g(s),t)\cdot\mathbf{e}\geq-\frac{2C'}{\cos1}v_s\alpha(t)s>-\frac{3C'}{e\cos1}v_s\alpha(t)\quad\text{if } s\in\left[s_0,e^{-1}\right].
\end{equation*}
Moreover, using \eqref{g'(s)} and the definition of $\rho_0$, it follows that
\begin{equation*}
    \alpha'(t)(s,g(s))\cdot\mathbf{e}<0\quad\text{if } s\in[\varepsilon(t),s_0],
\end{equation*}
\begin{equation*}
    \alpha'(t)(s,g(s))\cdot\mathbf{e}\leq \rho_0v_s\alpha'(t)<-\frac{3C'}{e\cos1}v_s\alpha(t).
\end{equation*}
On $\{(\alpha(t)\varepsilon(t),s):s\in[0,\alpha(t)g(\varepsilon(t))]\}$, $\mathbf{e}=\mathbf{e}_1$. Using Lemma~\ref{Precise velocity} yields
\begin{equation}\label{bar u}
\begin{split}
    \bar{u}(\alpha(t)\varepsilon(t),s,t)\cdot \mathbf{e}\geq\frac{\sin\varepsilon(t)}{\varepsilon(t)}h(\varepsilon(t))&|\ln\varepsilon(t)|\,\alpha(t)\varepsilon(t)-\frac{C_1f(\varepsilon(t))+C_2}{\cos1}\,\alpha(t)\varepsilon(t)\\
    &+\frac{\sin(\alpha(t)\sin\varepsilon(t))}{\sin(\alpha(t)e^{-1})\cos s}\,\inf_{s'\in[0,\alpha(t)]}\tilde{u}_\varphi(\alpha(t)e^{-1},s',t).
\end{split}        
\end{equation}
Notice that
\begin{equation*}
    \frac{\sin(\alpha(t)\sin\varepsilon(t))}{\sin(\alpha(t)e^{-1})\cos s}=e\,\varepsilon(t)+O(\alpha^2(t))
\end{equation*}
for $s\in[0,\alpha(t)]$. Similarly,
\begin{equation*}
    \frac{1}{\sin(\alpha(t)e^{-1})\cos s}=\frac{e}{\alpha(t)}+O(\alpha(t)).
\end{equation*}
For any $s\in[0,\alpha(t)g(\varepsilon(t))]$, in order for the right‑hand side of \eqref{bar u} to be strictly greater than $-(\alpha(t)\varepsilon(t))'$, assume that $s(t)\in[0,\alpha(t)]$ satisfies
\begin{equation*}
    \tilde{u}_\varphi(\alpha(t)e^{-1},s(t),t)=\inf_{s'\in[0,\alpha(t)]}\tilde{u}_\varphi(\alpha(t)e^{-1},s',t).
\end{equation*}
Then it holds that
\begin{equation*}
    \min\left\{\tilde{u}_\varphi(\alpha(t)e^{-1},s(t),t),\ \frac{\tilde{u}_\varphi(\alpha(t)e^{-1},s(t),t)}{\cos\alpha(t)}\right\}
    \leq\inf_{s'\in[0,\alpha(t)]}\frac{\tilde{u}_\varphi(\alpha(t)e^{-1},s',t)}{\cos s'}
    \leq\frac{\tilde{u}_\varphi(\alpha(t)e^{-1},s(t),t)}{\cos s(t)}.
\end{equation*}
Combining this with the equation satisfied by $\alpha(t)$, together with \eqref{v bound} and the definition of $f(\varepsilon)$, it follows that there exists a constant $C''>0$ such that it suffices for $\varepsilon(t)$ to satisfy
\begin{equation}\label{varepsilon}
    \varepsilon'(t)>-\left(\frac{\sin\varepsilon(t)}{\varepsilon(t)}h(\varepsilon(t))|\ln\varepsilon(t)|-\frac{C_1}{\cos1}\ln\ln(e^e-1+|\ln\varepsilon(t)|)-C''\right)\varepsilon(t).
\end{equation}
Consider the following ODE:
\begin{equation*}
    \varepsilon'(t)=-\left(\frac{\sin\varepsilon(t)}{\varepsilon(t)}h(\varepsilon(t))|\ln\varepsilon(t)|-\frac{C_1}{\cos1}\ln\ln(e^e-1+|\ln\varepsilon(t)|)-(C''+1)\right)\varepsilon(t),
\end{equation*}
and take $\varepsilon(0)>0$ sufficiently small such that $\varepsilon'(0)<0$. Let $k(t):=\ln(-\ln\varepsilon(t))$. Then we have
\begin{equation*}
    k'(t)=\bar{h}\bigl(\exp(-e^{k(t)})\bigr),
\end{equation*}
where $\lim_{s\to0}\bar{h}(s)=2/\pi$.
Consequently, \eqref{varepsilon} admits a monotonically decreasing solution satisfying \eqref{2/pi}.

   \section{Rotating sphere}
In this section we briefly discuss the case of a rotating sphere. We show that Theorems~\ref{Main1} and~\ref{Main2} extend naturally to the Euler equation on a sphere that rotates at a constant angular speed.

Assume the sphere rotates about the $x_3$-axis with constant angular velocity $\Omega$. Then the two‑dimensional Euler equation on the rotating sphere takes the form
\begin{equation}\label{rotating w}
    \partial_t\omega(\mathbf{x},t)+u(\mathbf{x},t)\cdot\nabla\bigl(\omega(\mathbf{x},t)+2\Omega x_3\bigr)=0,\qquad \text{on } \mathbb{S}^2\times\mathbb{R}.
\end{equation}
Introduce the absolute vorticity
\begin{equation*}
    \zeta(\mathbf{x},t):=\omega(\mathbf{x},t)+2\Omega x_3.
\end{equation*}
In terms of $\zeta$, equation \eqref{rotating w} can be rewritten as
\begin{equation}\label{rotating zeta}
    \partial_t \zeta(\mathbf{x},t)+\nabla^\perp\bigl(\mathcal{G}\zeta(\mathbf{x},t)-\Omega x_3\bigr)\cdot\nabla\zeta(\mathbf{x},t)=0,\qquad \text{on } \mathbb{S}^2\times\mathbb{R}.
\end{equation}
Denote by $(E_\Omega)$ the above equation. When $\Omega=0$, $(E_0)$ reduces to \eqref{Eulerw} with $\zeta$ in place of $\omega$.

A direct computation yields the following relation: $\zeta(\mathbf{x},t)$ is a solution of $(E_0)$ if and only if $\zeta\bigl(R^{\mathbf{e}_3}_{\Omega t}\mathbf{x},t\bigr)$ is a solution of $(E_\Omega)$, where $\mathbf{e}_3=(0,0,1)$ and $R^{\mathbf{p}}_\alpha$ denotes the rotation by angle $\alpha$ about the unit vector $\mathbf{p}$ (given explicitly by Rodrigues' formula).

Indeed, in spherical coordinates $(\varphi,\theta)$ the rotation $R^{\mathbf{e}_3}_{\Omega t}$ simply shifts the longitude: it sends a point with coordinates $(\varphi',\theta')$ to $(\varphi'+\Omega t,\theta')$. Therefore, if we define $\zeta_\Omega(\mathbf{y},t):=\zeta\bigl(R^{\mathbf{e}_3}_{\Omega t}\mathbf{y},t\bigr)$, then for the corresponding coordinate functions $\tilde\zeta(\varphi',\theta',t)=\zeta(\mathbf{y},t)$ we have
\begin{equation*}
    \tilde\zeta_\Omega(\varphi',\theta',t)=\zeta_\Omega(\mathbf{y},t)=\zeta\bigl(R^{\mathbf{e}_3}_{\Omega t}\mathbf{y},t\bigr)=\tilde\zeta(\varphi'+\Omega t,\theta',t).
\end{equation*}
Consequently,
\begin{equation*}
    \|\nabla\tilde\zeta_\Omega(\cdot,t)\|_{L^\infty}=\|\nabla\tilde\zeta(\cdot,t)\|_{L^\infty},
\end{equation*}
because a translation in $\varphi$ does not change the $L^\infty$ norm of the gradient.

Since Theorems~\ref{Main1} and~\ref{Main2} have been proved for the non‑rotating case $(E_0)$, the above equivalence immediately implies that the same statements (with the same optimal constant $2/\pi$) hold for the rotating sphere as well. Hence the results extend to the Euler equation on a rotating sphere.

\subsection*{\large Declarations:}

 \subsection*{Funding:}

 \par
 D. Cao and J. Fan were supported by  National Key R\&D Program of China (Grant 2022YFA1005602) and NNSF of China (Grant No. 12371212). G. Qin was supported by National Key R\&D Program of China (Grant 2025YFA1018400) and NNSF of China (Grant 12471190).
 
\subsection*{Author Contribution information}
 All authors contributed equally.

 \subsection*{Conflict of interest statement} On behalf of all authors, the corresponding author states that there is no conflict of interest.

 \subsection*{Data availability statement} All data generated or analysed during this study are included in this published article  and its supplementary information files.

\bibliographystyle{abbrv}
\bibliography{1}

@article{JYZ,
      title={Superlinear gradient growth for 2{D} {E}uler equation without boundary}, 
      author={In-Jee Jeong and Yao Yao and Tao Zhou},
      year={2025},
      Journal={Preprint, arXiv: 2507.15739},
}

@article {CJ,
    AUTHOR = {Choi, Kyudong and Jeong, In-Jee},
     TITLE = {Infinite growth in vorticity gradient of compactly supported
              planar vorticity near {L}amb dipole},
   JOURNAL = {Nonlinear Anal. Real World Appl.},
  FJOURNAL = {Nonlinear Analysis. Real World Applications. An International
              Multidisciplinary Journal},
    VOLUME = {65},
      YEAR = {2022},
     PAGES = {Paper No. 103470, 20},
      ISSN = {1468-1218,1878-5719},
   MRCLASS = {35Q31 (76B47)},
  MRNUMBER = {4350517},
       DOI = {10.1016/j.nonrwa.2021.103470},
       URL = {https://doi.org/10.1016/j.nonrwa.2021.103470},
}

@article {DS1,
    AUTHOR = {Denisov, Sergey A.},
     TITLE = {Infinite superlinear growth of the gradient for the
              two-dimensional {E}uler equation},
   JOURNAL = {Discrete Contin. Dyn. Syst.},
  FJOURNAL = {Discrete and Continuous Dynamical Systems},
    VOLUME = {23},
      YEAR = {2009},
    NUMBER = {3},
     PAGES = {755--764},
      ISSN = {1078-0947,1553-5231},
   MRCLASS = {35Q35 (35B45 35L65 76B99 76F99)},
  MRNUMBER = {2461825},
MRREVIEWER = {Chao\ Cheng\ Huang},
       DOI = {10.3934/dcds.2009.23.755},
       URL = {https://doi.org/10.3934/dcds.2009.23.755},
}

@article {DS2,
    AUTHOR = {Denisov, Sergey A.},
     TITLE = {Double exponential growth of the vorticity gradient for the
              two-dimensional {E}uler equation},
   JOURNAL = {Proc. Amer. Math. Soc.},
  FJOURNAL = {Proceedings of the American Mathematical Society},
    VOLUME = {143},
      YEAR = {2015},
    NUMBER = {3},
     PAGES = {1199--1210},
      ISSN = {0002-9939,1088-6826},
   MRCLASS = {35Q31 (35B45 76B47)},
  MRNUMBER = {3293735},
MRREVIEWER = {Francesco\ Fanelli},
       DOI = {10.1090/S0002-9939-2014-12286-6},
       URL = {https://doi.org/10.1090/S0002-9939-2014-12286-6},
}

@book {MB,
    AUTHOR = {Majda, Andrew J. and Bertozzi, Andrea L.},
     TITLE = {Vorticity and {I}ncompressible {F}low},
    SERIES = {Cambridge Texts in Applied Mathematics},
    VOLUME = {27},
 PUBLISHER = {Cambridge University Press, Cambridge},
      YEAR = {2002},
     PAGES = {xii+545},
      ISBN = {0-521-63057-6; 0-521-63948-4},
   MRCLASS = {76-02 (35Q30 35Q35 76B03 76D03 76D05)},
  MRNUMBER = {1867882},
MRREVIEWER = {Yuxi\ Zheng},
}

@article{HE,
author = {Hölder, E.},
journal = {Mathematische Zeitschrift},
pages = {727-738},
title = {Über die unbeschränkte Fortsetzbarkeit einer stetigen ebenen Bewegung in einer unbegrenzten inkompressiblen Flüssigkeit.},
url = {http://eudml.org/doc/168483},
volume = {37},
year = {1933},
}

@article {JV,
    AUTHOR = {Yudovich, V. I.},
     TITLE = {The loss of smoothness of the solutions of {E}uler equations
              with time},
   JOURNAL = {Dinamika Splo\v sn. Sredy},
  FJOURNAL = {Institut Gidrodinamiki Sibirskogo Otdelenija Akademii Nauk
              SSSR. Dinamika Splo\v sno\u i\ Sredy},
      YEAR = {1974},
    NUMBER = {16},
     PAGES = {71--78, 121},
      ISSN = {0420-0497},
   MRCLASS = {35Q99 (76.35)},
  MRNUMBER = {454419},
}

@article {KV,
    AUTHOR = {Kiselev, Alexander and \v{S}ver\'ak, Vladimir},
     TITLE = {Small scale creation for solutions of the incompressible
              two-dimensional {E}uler equation},
   JOURNAL = {Ann. of Math. (2)},
  FJOURNAL = {Annals of Mathematics. Second Series},
    VOLUME = {180},
      YEAR = {2014},
    NUMBER = {3},
     PAGES = {1205--1220},
      ISSN = {0003-486X,1939-8980},
   MRCLASS = {35Q31 (35B45 76B03)},
  MRNUMBER = {3245016},
MRREVIEWER = {Paolo\ Secchi},
       DOI = {10.4007/annals.2014.180.3.9},
       URL = {https://doi.org/10.4007/annals.2014.180.3.9},
}

@article {LH,
    AUTHOR = {Luo, Guo and Hou, Thomas Y.},
     TITLE = {Toward the finite-time blowup of the 3{D} axisymmetric {E}uler
              equations: a numerical investigation},
   JOURNAL = {Multiscale Model. Simul.},
  FJOURNAL = {Multiscale Modeling \& Simulation. A SIAM Interdisciplinary
              Journal},
    VOLUME = {12},
      YEAR = {2014},
    NUMBER = {4},
     PAGES = {1722--1776},
      ISSN = {1540-3459,1540-3467},
   MRCLASS = {35Q31 (35B44 65M20 65M60 76B03)},
  MRNUMBER = {3278833},
       DOI = {10.1137/140966411},
       URL = {https://doi.org/10.1137/140966411},
}

@article {NS,
    AUTHOR = {Nadirashvili, N. S.},
     TITLE = {Wandering solutions of the two-dimensional {E}uler equation},
   JOURNAL = {Funktsional. Anal. i Prilozhen.},
  FJOURNAL = {Akademiya Nauk SSSR. Funktsional\cprime ny\u i\ Analiz i ego
              Prilozheniya},
    VOLUME = {25},
      YEAR = {1991},
    NUMBER = {3},
     PAGES = {70--71},
      ISSN = {0374-1990},
   MRCLASS = {35Q30},
  MRNUMBER = {1139875},
       DOI = {10.1007/BF01085491},
       URL = {https://doi.org/10.1007/BF01085491},
}

@article {WW,
    AUTHOR = {Wolibner, W.},
     TITLE = {Un theor\`eme sur l'existence du mouvement plan d'un fluide
              parfait, homog\`ene, incompressible, pendant un temps
              infiniment long},
   JOURNAL = {Math. Z.},
  FJOURNAL = {Mathematische Zeitschrift},
    VOLUME = {37},
      YEAR = {1933},
    NUMBER = {1},
     PAGES = {698--726},
      ISSN = {0025-5874,1432-1823},
   MRCLASS = {99-04},
  MRNUMBER = {1545430},
       DOI = {10.1007/BF01474610},
       URL = {https://doi.org/10.1007/BF01474610},
}

@article {XX,
    AUTHOR = {Xu, Xiaoqian},
     TITLE = {Fast growth of the vorticity gradient in symmetric smooth
              domains for 2{D} incompressible ideal flow},
   JOURNAL = {J. Math. Anal. Appl.},
  FJOURNAL = {Journal of Mathematical Analysis and Applications},
    VOLUME = {439},
      YEAR = {2016},
    NUMBER = {2},
     PAGES = {594--607},
      ISSN = {0022-247X,1096-0813},
   MRCLASS = {35Q31 (76B03 76B47)},
  MRNUMBER = {3475939},
MRREVIEWER = {Franck\ Sueur},
       DOI = {10.1016/j.jmaa.2016.02.066},
       URL = {https://doi.org/10.1016/j.jmaa.2016.02.066},
}

@article{YV1,
 author = {Yudovich, V. I.},
 title = {The flow of a perfect, incompressible liquid through a given region},
 fjournal = {Soviet Physics. Doklady},
 journal = {Sov. Phys., Dokl.},
 issn = {0038-5689},
 volume = {7},
 pages = {789--791},
 year = {1962},
 language = {English},
 zbMATH = {3226068},
 Zbl = {0139.20502}
}

@article {YV2,
    AUTHOR = {Yudovich, V. I.},
     TITLE = {On the loss of smoothness of the solutions of the {E}uler
              equations and the inherent instability of flows of an ideal
              fluid},
   JOURNAL = {Chaos},
  FJOURNAL = {Chaos. An Interdisciplinary Journal of Nonlinear Science},
    VOLUME = {10},
      YEAR = {2000},
    NUMBER = {3},
     PAGES = {705--719},
      ISSN = {1054-1500,1089-7682},
   MRCLASS = {76B03 (35Q35 37N10 76E99)},
  MRNUMBER = {1791984},
       DOI = {10.1063/1.1287066},
       URL = {https://doi.org/10.1063/1.1287066},
}

@article {YV3,
    AUTHOR = {Yudovich, V. I.},
     TITLE = {Non-stationary flows of an ideal incompressible fluid},
   JOURNAL = {\v Z. Vy\v cisl. Mat i Mat. Fiz.},
  FJOURNAL = {\v Zurnal Vy\v cislitel\cprime no\u i\ Matematiki i Matemati\v
              cesko\u i\ Fiziki},
    VOLUME = {3},
      YEAR = {1963},
     PAGES = {1032--1066},
      ISSN = {0044-4669},
   MRCLASS = {35.79},
  MRNUMBER = {158189},
MRREVIEWER = {P.\ C.\ Fife},
}

@article {ZA1,
    AUTHOR = {Zlato\v{s}, Andrej},
     TITLE = {Exponential growth of the vorticity gradient for the {E}uler
              equation on the torus},
   JOURNAL = {Adv. Math.},
  FJOURNAL = {Advances in Mathematics},
    VOLUME = {268},
      YEAR = {2015},
     PAGES = {396--403},
      ISSN = {0001-8708,1090-2082},
   MRCLASS = {35Q31 (58J99)},
  MRNUMBER = {3276599},
MRREVIEWER = {Michele\ Coti Zelati},
       DOI = {10.1016/j.aim.2014.08.012},
       URL = {https://doi.org/10.1016/j.aim.2014.08.012},
}

@article {DSA,
    AUTHOR = {Denisov, Sergey A.},
     TITLE = {The sharp corner formation in 2{D} {E}uler dynamics of
              patches: infinite double exponential rate of merging},
   JOURNAL = {Arch. Ration. Mech. Anal.},
  FJOURNAL = {Archive for Rational Mechanics and Analysis},
    VOLUME = {215},
      YEAR = {2015},
    NUMBER = {2},
     PAGES = {675--705},
      ISSN = {0003-9527,1432-0673},
   MRCLASS = {35Q31 (76N10)},
  MRNUMBER = {3294414},
MRREVIEWER = {Xinyu\ He},
       DOI = {10.1007/s00205-014-0793-2},
       URL = {https://doi.org/10.1007/s00205-014-0793-2},
}

@article {KAZA,
    AUTHOR = {Kiselev, Alexander and Zlato\v{s}, Andrej},
     TITLE = {Blow up for the 2{D} {E}uler equation on some bounded domains},
   JOURNAL = {J. Differential Equations},
  FJOURNAL = {Journal of Differential Equations},
    VOLUME = {259},
      YEAR = {2015},
    NUMBER = {7},
     PAGES = {3490--3494},
      ISSN = {0022-0396,1090-2732},
   MRCLASS = {35Q31 (35B44)},
  MRNUMBER = {3360679},
       DOI = {10.1016/j.jde.2015.04.027},
       URL = {https://doi.org/10.1016/j.jde.2015.04.027},
}

@article {ZA2,
    AUTHOR = {Zlato\v{s}, Andrej},
     TITLE = {Maximal double-exponential growth for the {E}uler equation on
              the half-plane},
   JOURNAL = {Invent. Math.},
  FJOURNAL = {Inventiones Mathematicae},
    VOLUME = {243},
      YEAR = {2026},
    NUMBER = {1},
     PAGES = {117--126},
      ISSN = {0020-9910,1432-1297},
   MRCLASS = {99-06},
  MRNUMBER = {5008147},
       DOI = {10.1007/s00222-025-01374-5},
       URL = {https://doi.org/10.1007/s00222-025-01374-5},
}

@article {DE,
    AUTHOR = {Drivas, Theodore D. and Elgindi, Tarek M.},
     TITLE = {Singularity formation in the incompressible {E}uler equation
              in finite and infinite time},
   JOURNAL = {EMS Surv. Math. Sci.},
  FJOURNAL = {EMS Surveys in Mathematical Sciences},
    VOLUME = {10},
      YEAR = {2023},
    NUMBER = {1},
     PAGES = {1--100},
      ISSN = {2308-2151,2308-216X},
   MRCLASS = {35Q31 (35Q30)},
  MRNUMBER = {4667415},
       DOI = {10.4171/emss/66},
       URL = {https://doi.org/10.4171/emss/66},
}

@article {DEJ,
    AUTHOR = {Drivas, Theodore D. and Elgindi, Tarek M. and Jeong, In-Jee},
     TITLE = {Twisting in {H}amiltonian flows and perfect fluids},
   JOURNAL = {Invent. Math.},
  FJOURNAL = {Inventiones Mathematicae},
    VOLUME = {238},
      YEAR = {2024},
    NUMBER = {1},
     PAGES = {331--370},
      ISSN = {0020-9910,1432-1297},
   MRCLASS = {35Q31 (37J25 76E05)},
  MRNUMBER = {4794597},
MRREVIEWER = {Xianyi\ Zeng},
       DOI = {10.1007/s00222-024-01285-x},
       URL = {https://doi.org/10.1007/s00222-024-01285-x},
}

@inproceedings {KA,
    AUTHOR = {Kiselev, Alexander},
     TITLE = {Small scales and singularity formation in fluid dynamics},
 BOOKTITLE = {Proceedings of the {I}nternational {C}ongress of
              {M}athematicians---{R}io de {J}aneiro 2018. {V}ol. {III}.
              {I}nvited lectures},
     PAGES = {2363--2390},
 PUBLISHER = {World Sci. Publ., Hackensack, NJ},
      YEAR = {2018},
      ISBN = {978-981-3272-92-7; 978-981-3272-87-3},
   MRCLASS = {35Q35 (76B03 86A08)},
  MRNUMBER = {3966854},
MRREVIEWER = {Alberto\ Valli},
}

@incollection {KAA,
    AUTHOR = {Kiselev, Alexander A.},
     TITLE = {Small scale creation in active scalars},
 BOOKTITLE = {Progress in mathematical fluid dynamics},
    SERIES = {Lecture Notes in Math.},
    VOLUME = {2272},
     PAGES = {125--161},
 PUBLISHER = {Springer, Cham},
      YEAR = {[2020] \copyright 2020},
      ISBN = {978-3-030-54899-5; 978-3-030-54898-8},
   MRCLASS = {76B03 (35Q35)},
  MRNUMBER = {4176667},
       DOI = {10.1007/978-3-030-54899-5\_4},
       URL = {https://doi.org/10.1007/978-3-030-54899-5_4},
}

@article {DB,
    AUTHOR = {Dritschel, D. G. and Boatto, S.},
     TITLE = {The motion of point vortices on closed surfaces},
   JOURNAL = {Proc. R. Soc. A},
  FJOURNAL = {Proceedings A},
    VOLUME = {471},
      YEAR = {2015},
    NUMBER = {2176},
     PAGES = {20140890},
      ISSN = {1364-5021,1471-2946},
   MRCLASS = {76B47 (37K45)},
  MRNUMBER = {3325200},
MRREVIEWER = {Yasuhide\ Fukumoto},
       DOI = {10.1098/rspa.2014.0890},
       URL = {https://doi.org/10.1098/rspa.2014.0890},
}

@book {MCP,
    AUTHOR = {Marchioro, Carlo and Pulvirenti, Mario},
     TITLE = {Mathematical theory of incompressible nonviscous fluids},
    SERIES = {Applied Mathematical Sciences},
    VOLUME = {96},
 PUBLISHER = {Springer-Verlag, New York},
      YEAR = {1994},
     PAGES = {xii+283},
      ISBN = {0-387-94044-8},
   MRCLASS = {76-02 (35Q35 76Cxx 76E99 76F99 76M25)},
  MRNUMBER = {1245492},
MRREVIEWER = {J.\ Thomas\ Beale},
       DOI = {10.1007/978-1-4612-4284-0},
       URL = {https://doi.org/10.1007/978-1-4612-4284-0},
}

@article {TM,
    AUTHOR = {Taylor, Michael},
     TITLE = {Euler equation on a rotating surface},
   JOURNAL = {J. Funct. Anal.},
  FJOURNAL = {Journal of Functional Analysis},
    VOLUME = {270},
      YEAR = {2016},
    NUMBER = {10},
     PAGES = {3884--3945},
      ISSN = {0022-1236,1096-0783},
   MRCLASS = {35Q31 (35B35 35R09)},
  MRNUMBER = {3478875},
MRREVIEWER = {Roger\ Lewandowski},
       DOI = {10.1016/j.jfa.2016.02.023},
       URL = {https://doi.org/10.1016/j.jfa.2016.02.023},
}

@article {CAG,
    AUTHOR = {Constantin, A. and Germain, P.},
     TITLE = {Stratospheric planetary flows from the perspective of the
              {E}uler equation on a rotating sphere},
   JOURNAL = {Arch. Ration. Mech. Anal.},
  FJOURNAL = {Archive for Rational Mechanics and Analysis},
    VOLUME = {245},
      YEAR = {2022},
    NUMBER = {1},
     PAGES = {587--644},
      ISSN = {0003-9527,1432-0673},
   MRCLASS = {35Q31 (76U60)},
  MRNUMBER = {4444081},
MRREVIEWER = {Yang\ Pu},
       DOI = {10.1007/s00205-022-01791-3},
       URL = {https://doi.org/10.1007/s00205-022-01791-3},
}

@article {CLW,
    AUTHOR = {Cao, Daomin and Li, Shuanglong and Wang, Guodong},
     TITLE = {Desingularization of vortices for the incompressible {E}uler
              equation on a sphere},
   JOURNAL = {Calc. Var. Partial Differential Equations},
  FJOURNAL = {Calculus of Variations and Partial Differential Equations},
    VOLUME = {64},
      YEAR = {2025},
    NUMBER = {9},
     PAGES = {Paper No. 279, 26},
      ISSN = {0944-2669,1432-0835},
   MRCLASS = {76B47 (35Q31 76B03)},
  MRNUMBER = {4970251},
       DOI = {10.1007/s00526-025-03154-8},
       URL = {https://doi.org/10.1007/s00526-025-03154-8},
}

@article{DR,
      title={Confinement results near point vortices on the rotating sphere}, 
      author={Martin Donati and Emeric Roulley},
      year={2026},
      Journal={Preprint, arXiv: 2602.10061},
}

@article {HLY,
    AUTHOR = {Hu, Zhongtian and Luo, Chenyun and Yao, Yao},
     TITLE = {Small scale creation for 2{D} free boundary {E}uler equations
              with surface tension},
   JOURNAL = {Ann. PDE},
  FJOURNAL = {Annals of PDE. Journal Dedicated to the Analysis of Problems
              from Physical Sciences},
    VOLUME = {10},
      YEAR = {2024},
    NUMBER = {2},
     PAGES = {Paper No. 13, 19},
      ISSN = {2524-5317,2199-2576},
   MRCLASS = {35Q31 (35R35 76B45)},
  MRNUMBER = {4769136},
MRREVIEWER = {Qin\ Zhao},
       DOI = {10.1007/s40818-024-00179-8},
       URL = {https://doi.org/10.1007/s40818-024-00179-8},
}

@article{CGLZ,
      title={The onset of instability for zonal stratospheric flows}, 
      author={Constantin,Adrian  and  Germain,Pierre and Lin, Zhiwu and Zhu,Hao },
      year={2025},
      journal={arXiv:2503.14191}
}

@article{CW24,
      title={On {A}rnold-type stability theorems for the {E}uler equation on a sphere}, 
      author={Daomin Cao and Guodong Wang},
      year={2024},
      journal={arXiv: 2407.06752}
}

@article {CSMC,
    AUTHOR = {Caprino, S. and Marchioro, C.},
     TITLE = {On nonlinear stability of stationary {E}uler flows on a
              rotating sphere},
   JOURNAL = {J. Math. Anal. Appl.},
  FJOURNAL = {Journal of Mathematical Analysis and Applications},
    VOLUME = {129},
      YEAR = {1988},
    NUMBER = {1},
     PAGES = {24--36},
      ISSN = {0022-247X},
   MRCLASS = {76E30 (58D25)},
  MRNUMBER = {921375},
MRREVIEWER = {D.\ A.\ Nield},
       DOI = {10.1016/0022-247X(88)90231-4},
       URL = {https://doi.org/10.1016/0022-247X(88)90231-4},
}

@article {GHR,
    AUTHOR = {Garc\'ia, Claudia and Hassainia, Zineb and Roulley, Emeric},
     TITLE = {Dynamics of vortex cap solutions on the rotating unit sphere},
   JOURNAL = {J. Differential Equations},
  FJOURNAL = {Journal of Differential Equations},
    VOLUME = {417},
      YEAR = {2025},
     PAGES = {1--63},
      ISSN = {0022-0396,1090-2732},
   MRCLASS = {35Q31 (76U05)},
  MRNUMBER = {4827843},
MRREVIEWER = {Olga\ S.\ Rozanova},
       DOI = {10.1016/j.jde.2024.11.012},
       URL = {https://doi.org/10.1016/j.jde.2024.11.012},
}

@article{GE,
      title={Filamentation near monotone zonal vortex caps}, 
      author={G. M. Marin and E. Roulley},
      year={2025},
      journal={arXiv:2505.12197}
}

@article {SZ1,
    AUTHOR = {Sakajo, Takashi and Zou, Changjun},
     TITLE = {Regularization for point vortices on {$\Bbb S^2$}},
   JOURNAL = {Nonlinearity},
  FJOURNAL = {Nonlinearity},
    VOLUME = {38},
      YEAR = {2025},
    NUMBER = {11},
     PAGES = {Paper No. 115021, 36},
      ISSN = {0951-7715,1361-6544},
   MRCLASS = {76B03 (76B47)},
  MRNUMBER = {4994622},
       DOI = {10.1088/1361-6544/ae1ee0},
       URL = {https://doi.org/10.1088/1361-6544/ae1ee0},
}

@article{SZ2,
      title={{$C^1$} type regularization for point vortices on {$\Bbb S^2$}}, 
      author={Sakajo, Takashi and Zou, Changjun},
      year={2024},
      journal={arXiv:2411.15176}
}

@article {LF,
    AUTHOR = {Laurent-Polz, F.},
     TITLE = {Point vortices on a rotating sphere},
   JOURNAL = {Regul. Chaotic Dyn.},
  FJOURNAL = {Regular \& Chaotic Dynamics. International Scientific Journal},
    VOLUME = {10},
      YEAR = {2005},
    NUMBER = {1},
     PAGES = {39--58},
      ISSN = {1560-3547,1468-4845},
   MRCLASS = {76B47 (37J15 37N10 86A10)},
  MRNUMBER = {2136829},
MRREVIEWER = {Koji\ Ohkitani},
       DOI = {10.1070/RD2005v010n01ABEH000299},
       URL = {https://doi.org/10.1070/RD2005v010n01ABEH000299},
}

\end{document}